 \newtheorem{thm}{Theorem}[section]
 \newtheorem{lem}[thm]{Lemma}
 \newtheorem{prop}[thm]{Proposition}
 \theoremstyle{definition}
 \theoremstyle{remark}
\numberwithin{equation}{section}
\numberwithin{figure}{section}
\newcommand{\CC}{{\mathbb C}}
\newcommand{\TT}{{\mathbb T}}
\newcommand{\NN}{{\mathbb N}}
\newcommand{\EE}{\mathbb{E}}
\newcommand{\PP}{\mathbb{P}}
\begin{document}
\bibliographystyle{alpha}

\title[Expected discrepancy for zeros of random polynomials]
{Expected discrepancy for zeros of random algebraic polynomials}

\author[Pritsker]
{Igor E. Pritsker}

\address{Department of Mathematics, Oklahoma State University, Stillwater, OK 74078, USA.}
\email{igor@math.okstate.edu}

\author[Sola]
{Alan A. Sola}

\address{Statistical Laboratory, University of Cambridge, Cambridge CB3 0WB,UK.}
\email{a.sola@statslab.cam.ac.uk}

\subjclass[2010]{Primary: 30C15; Secondary: 30B20, 60B10.}
\date{8/18/2012}
\keywords{Random polynomials, expected discrepancy of roots, expected number of roots in special sets.}
\thanks{Pritsker acknowledges support from NSA under grant H98230-12-1-0227; Sola acknowledges support from the EPSRC under grant EP/103372X/1.}

\dedicatory{Dedicated to Vladimir Andrievskii on his 60th birthday}

\begin{abstract}
We study asymptotic clustering of zeros of random polynomials, and show that the expected discrepancy of roots of a polynomial of degree $n$, with not necessarily independent coefficients, decays like $\sqrt{\log n/n}$. Our proofs rely on discrepancy results for deterministic polynomials, and order statistics of a random variable. We also consider the expected number of zeros lying in certain subsets of the plane, such as circles centered on the unit circumference, and polygons inscribed in the unit circumference.
\end{abstract}

\maketitle

\section{Introduction}
\subsection{Random polynomials and their zeros}
Let $\{C_k\}_{k=0}^{\infty}$ be a sequence of independent and identically distributed (iid) complex-valued random variables. In this paper, we study families of random polynomials
\begin{equation} \label{generalrandompoly}
P_n(z)=\sum_{k=0}^{n}C_k z^k
\end{equation}
and the geometry of their zeros; we let $\mathcal{Z}(P_n)=\{Z_1,\ldots,Z_n\}$ denote the set of complex zeros of such a polynomial of degree $n$. We use the notation $P_n(z)=\sum_{k=0}^n c_kz^k$ whenever we make statements that apply to any polynomial with $c_0,\ldots,c_n \in \CC$ and reserve capital letters for random coefficients. The zeros $\{Z_k\}_{k=1}^n$ of a random polynomial $P_n$ define a natural random measure on $\CC$, the {\it counting measure of roots} or {\it empirical measure}
\[\tau_n=\frac{1}{n}\sum_{k=1}^n\delta_{Z_k}.\]

Random polynomials of the form \eqref{generalrandompoly} have been studied by 
many authors, and it is known that, under mild conditions on the distribution of the coefficients of $P_n$, these empirical measures converge to $\mu_{\TT}$, the normalized arc-length measure, in the weak* topology (or weakly, in the language of probability theory) as $n \rightarrow \infty$. For the history of the subject and a list of references we refer the reader to the books \cite{BR, Fa}; 
we shall discuss some recent results shortly.

It is natural to ask how fast the empirical measures converge, and in this work, we provide estimates on the expected rate of
convergence of associated quantities that measure the distance between counting measures and the uniform measure on the unit circle $\TT=\{e^{i\theta}\colon \theta \in [0,2\pi)\}$.

\subsection{Discrepancy of zeros and norms on polynomials}
For a polynomial $P_n$, we write $N(\alpha,\beta)$ for the number of elements of $\mathcal{Z}(P_n)$ that are contained
in the sector
\[S(\alpha,\beta)=\{z \in \CC: 0<\alpha\leq \arg z< \beta \leq 2\pi\};\]
later on we shall also work with annular sectors of the form
\begin{equation}
A_r(\alpha,\beta)=\{z \in \CC: r<|z|<1/r,\ \alpha\leq \arg z< \beta\},\quad  0<r<1.\label{angsector}
\end{equation}

A very classical result concerning the {\it angular discrepancy of zeros} is the theorem of Erd\H{o}s and Tur\'an \cite{ET}, which in its improved form due to Ganelius (see \cite {Gan}) asserts that
\begin{equation}
\left| \frac{N(\alpha, \beta)}{n}-\frac{\beta-\alpha}{2\pi}\right|\leq
\sqrt{\frac{2\pi}{\bf k}}\sqrt{\frac{1}{n}\log\left[\frac{\|P_n\|_{\infty}}{\sqrt{|c_0c_n|}}\right]};
\label{ETtheorem}
\end{equation}
here ${\bf k}=\sum_{k=0}^{\infty}(-1)^k/(2k+1)^2$ denotes Catalan's constant, and $\|P_n\|_{\infty}=\sup_{\mathbb{T}}|P_n|$. Suppose now that the coefficients of $P_n$ (random or deterministic) are uniformly
bounded with $|c_k|\leq K$, say. We then have the easy estimate
\[\log \|P_n\|_{\infty}\leq \log\left(\max_{z\in\TT} \sum_{k=0}^n|c_kz^k|\right)\leq \log(n+1)+\log K,\]
and by \eqref{ETtheorem}, it follows that
\[\left| \frac{N(\alpha, \beta)}{n}-\frac{\beta-\alpha}{2\pi}\right|\leq C\sqrt{\frac{\log(n+1)+\log K-\log\sqrt{|c_0c_n|}}{n}}.\]
Thus we see that if $c_0c_n \neq 0$ (almost surely), the discrepancy is of the order $\sqrt{\log n/n}$. Moreover,
it can be shown that this rate of decay is essentially best possible in the sense that one can construct (deterministic)
families of polynomials that exhibit discrepancy of this order (see \cite{ET, AM, P_Ark}). For general
coefficients however, it is hard to obtain effective estimates on $\|P_n\|_{\infty}$.

In order to state our results, we need to introduce certain additional norms on the circle. For a polynomial
$P_n$ and $0<p<\infty$, we set
\begin{equation*}
\|P_n\|_p=\left(\frac{1}{2\pi}\int_{\TT}|P_n(e^{i\theta})|^p\,d\theta\right)^{1/p}.
\end{equation*}
When $P_n(z)=\sum_{k=0}^n c_k z^k$ and $p=2$, we have
$\|P_n\|_2^2=\sum_{k=0}^n|c_k|^2$.

As usual, we define the Mahler measure (geometric mean) of $P_n$ by
\[M(P_n)=\exp\left(\frac{1}{2\pi}\int_0^{2\pi}\log|P_n(e^{i\theta})|d\theta\right),\]
and we set $m(P_n)=\log(M(P_n)).$ We also use \[M^+(P_n)=\exp\left(\frac{1}{2\pi}\int_0^{2\pi}\log^+|P_n(e^{i\theta})|d\theta\right)\]
and $m^+(P_n)=\log(M^+(P_n)).$ It is well known that
\begin{align} \label{1.3}
M(P_n) \le \|P_n\|_p \le \|P_n\|_q \le \|P_n\|_{\infty}, \quad 0<p<q<\infty.
\end{align}
Furthermore, the definitions immediately give that
\begin{align} \label{1.4}
M(P_n) \le M^+(P_n) \le \|P_n\|_{\infty}.
\end{align}

Recent papers on random polynomials and the behavior of their roots include \cite{HS, HN,IZ,IZa,KZ}.
Improving an earlier result of \v{S}paro and \v{S}ur, Ibragimov and Zaporozhets \cite{IZa} prove that the condition $\EE[\log^+|C_0|]<\infty$ is
both necessary and sufficient for almost sure asymptotic concentration of roots on the unit circumference. The paper \cite{KZ} deals with the interesting case of heavy-tailed coefficients, where $\EE[\log^+|C_0|]=\infty$ and the asymptotic distribution of roots is uniform in argument, but the radial positions of the roots accumulate on more than one circle. In \cite{HN}, Hughes and Nikeghbali deal with polynomials with not necessarily independent coefficients, and use estimates on $\EE[\log(\sum_k|C_k|)]$ to deduce, via Erd\H{o}s and Tur\'an's theorem, that roots concentrate on the unit circumference.

\subsection{Overview of the paper}
In this paper, we initiate a systematic quantitative study of convergence results for general random polynomials with coefficients that are not necessarily bounded, but satisfy the condition for asymptotic concentration. Using rather elementary methods, similar in spirit to those in \cite{HN}, we obtain results that are optimal unless further restrictions are introduced. If it is not stated otherwise, we shall impose the following standing assumption on the coefficients:
\begin{itemize}
\item $C_0,C_1,\ldots$ are independent and identically distributed (iid) complex random variables, with absolutely continuous distribution and $\EE[|C_0|^t]=\mu<\infty$ for some $t>0$.
\end{itemize}
We sometimes relax this assumption at the price of other more restrictive hypotheses.

Our main object of study are {\it expected discrepancies}, and we proceed as follows. We seek to control the discrepancy of zeros of a given random polynomial in annular sectors, first in terms of $m^+(P_n)$ and $m(P_n)$, and then $\log(\|P_n\|_2)$. To achieve this, we need to extend certain results of Mignotte and others; this is done in Section \ref{Sect2}. When estimating the expected discrepancy of zeros, we find it convenient to consider the quantity $\EE[\log(\max_k|C_k|)]$, since it appears in both upper and lower estimates on $\EE[\log(\|P_n\|_2)]$ and is amenable to elementary estimates. Under our standing assumption, we find that $\EE[\log(\max_k|C_k|)]=O(\log n)$. More precise statements are given in Section \ref{Sect3}. Our main result on the expected discrepancy is contained in Theorem \ref{MainThm}, and it deals with radial and angular parts of zeros simultaneously:
\begin{equation*}
\EE\left[\left| \tau_n\left(A_r(\alpha,\beta)\right) - \frac{\beta-\alpha}{2\pi}\right|\right]\\ \leq
C(r,t)\sqrt{\frac{\log(n+1) + \log \mu}{n}}
\end{equation*}
as $n\rightarrow \infty$; here, $C(r,t)$ is a constant that only depends on $r<1$ and on $t>0.$ Inspired by the work of Borwein, Erd\'elyi, and Littmann \cite{BEL}, we also derive a number of corollaries concerning
the {\it expected number of zeros} of random polynomials in polygons inscribed in the unit disk, and other natural sets. Under the additional assumption of a finite second moment, we indicate how our results extend to random coefficients that are not necessarily independent or identically distributed, yielding the same rate of decay for the expected discrepancy. Finally, in Section \ref{Sect4}, we present some elementary examples that illustrate that while $\EE[\log(\max_k|C_k|)]$ can be smaller in
special cases, $O(\log n)$ is the correct order of magnitude if no additional assumptions are made.

Full proofs are given in Section \ref{Sect5}.

\section{Annular discrepancies and norms on $\TT$}\label{Sect2}
The theorem of Erd\H{o}s and Tur\'an has been extended by several authors (see, for instance \cite{AB, AM, M, P, P_Ark}). In particular, Mignotte showed that one can use the weaker norm $M^+$ in estimates like \eqref{ETtheorem}, see \cite{M} and \cite{AM}. It is also desirable to include information about the radial behavior of the roots. This can be easily done by using Jensen's formula,
which leads to the following discrepancy estimate for the annular sectors
\eqref{angsector}.

\begin{prop}\label{mignotteprop}
Let $P_n(z)=\sum_{k=0}^n c_k z^k,\ c_k\in\CC$, and assume $c_0c_n\neq 0.$ For any $r\in(0,1)$ and $0\le \alpha < \beta < 2\pi,$ we have
\begin{align} \label{mignottediscr}
\left| \tau_n\left(A_r(\alpha,\beta)\right) - \frac{\beta-\alpha}{2\pi}\right| &\leq
\sqrt{\frac{2\pi}{{\bf k}}} \sqrt{\frac{1}{n}\,m^+\left(\frac{P_n}{\sqrt{|c_0c_n|}}\right)} \\ \nonumber &+ \frac{2}{n(1-r)} \, m\left(\frac{P_n}{\sqrt{|c_0c_n|}}\right).
\end{align}
\end{prop}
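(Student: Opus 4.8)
The plan is to split the annular-sector discrepancy into a purely angular part, controlled by the Erd\H{o}s--Tur\'an--Mignotte estimate, and a radial correction that counts the zeros lying far from $\TT$. Writing $S=S(\alpha,\beta)$ for the full sector and $A_r=A_r(\alpha,\beta)$ for the truncated annular sector, the triangle inequality gives
\[
\left|\tau_n(A_r)-\frac{\beta-\alpha}{2\pi}\right| \le \left|\tau_n(S)-\frac{\beta-\alpha}{2\pi}\right| + \bigl|\tau_n(S)-\tau_n(A_r)\bigr|.
\]
Since $\tau_n(S)=N(\alpha,\beta)/n$, the first term is exactly the angular discrepancy, and I would bound it directly by Mignotte's sharpening of \eqref{ETtheorem}, in which $\log(\|P_n\|_\infty/\sqrt{|c_0c_n|})$ is replaced by $m^+(P_n/\sqrt{|c_0c_n|})$; this reproduces the first summand on the right-hand side of \eqref{mignottediscr}.

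It remains to handle the radial correction $|\tau_n(S)-\tau_n(A_r)|$, which is $1/n$ times the number of zeros lying in $S$ but outside the annulus $\{r<|z|<1/r\}$, and hence is at most $1/n$ times the number of \emph{all} zeros with $|Z_j|\le r$ or $|Z_j|\ge 1/r$. For the small zeros I would apply Jensen's formula to $P_n$: since each zero with $|Z_j|\le r$ contributes at least $\log(1/r)$ to $\sum_{|Z_j|<1}\log(1/|Z_j|) = m(P_n)-\log|c_0|$, their number is at most $(m(P_n)-\log|c_0|)/\log(1/r)$. For the large zeros I would pass to the reciprocal polynomial $P_n^*(z)=z^n\overline{P_n(1/\bar z)}$, whose zeros are $1/\overline{Z_j}$ and which satisfies $|P_n^*|=|P_n|$ on $\TT$, so that $m(P_n^*)=m(P_n)$ while $|P_n^*(0)|=|c_n|$. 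Applying the same Jensen bound to $P_n^*$ controls the zeros with $|Z_j|\ge 1/r$ by $(m(P_n)-\log|c_n|)/\log(1/r)$. Adding the two estimates and using $m(P_n)-\tfrac12\log|c_0c_n| = m(P_n/\sqrt{|c_0c_n|})$ shows the number of truncated zeros is at most $2\,m(P_n/\sqrt{|c_0c_n|})/\log(1/r)$.

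Finally I would replace $\log(1/r)$ by $1-r$ in the denominator by means of the elementary inequality $\log(1/r)\ge 1-r$ for $0<r<1$; this step is legitimate because $m(P_n/\sqrt{|c_0c_n|})\ge 0$, which follows from the factorization $M(P_n)=|c_n|\prod_j\max(1,|Z_j|)$ together with $|c_0|=|c_n|\prod_j|Z_j|$, giving $M(P_n)/\sqrt{|c_0c_n|}=\prod_j\max(1,|Z_j|)/\sqrt{|Z_j|}\ge 1$. Dividing by $n$ then yields the second summand $\frac{2}{n(1-r)}m(P_n/\sqrt{|c_0c_n|})$. The only genuinely delicate point is the radial count: one must invoke Jensen's formula symmetrically via the reciprocal polynomial so that both the inner and outer zeros are measured against the same Mahler measure, and then keep careful track of the normalization so that the two contributions combine into a single $m(P_n/\sqrt{|c_0c_n|})$.
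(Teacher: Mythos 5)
Your proof is correct and follows essentially the same route as the paper: the angular part is handled by Mignotte's sharpening of the Erd\H{o}s--Tur\'an bound, and the radial correction is obtained from Jensen's formula applied to $P_n$ and to its reciprocal polynomial $P_n^*$, exactly as in the paper's Lemma \ref{lemma5.1}. The only cosmetic difference is bookkeeping: the paper applies $\log(1/r)\ge 1-r$ inside the zero-counting estimate (where nonnegativity is automatic), while you divide by $\log(1/r)$ first and then justify the replacement via $m\left(P_n/\sqrt{|c_0c_n|}\right)\ge 0$, which you verify correctly.
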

This estimate shows how close the zero counting measure $\tau_n$ is to $\mu_{\TT}.$ It is often more convenient to use the standard $L^p$ norms, especially the $L^2$ norm. We mention the following elementary but useful estimate.
\begin{prop} \label{Lpprop}
If $p\in(0,\infty)$ and $\|P_n\|_p \ge 1$, then
\[m^+(P_n) \le \log \|P_n\|_p + 1/(ep).\]
\end{prop}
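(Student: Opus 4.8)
The plan is to reduce the statement to a single sharp elementary inequality by absorbing the exponent $p$. Writing $\sigma$ for the normalized arc-length measure $d\theta/(2\pi)$ on $\TT$ and setting $f=|P_n(e^{i\theta})|$, I would first record the homogeneity identity $\log^+(f^p)=p\,\log^+ f$, which holds because $p>0$. Putting $g=f^p$ and $J=\int_{\TT} g\,d\sigma=\|P_n\|_p^p$, the two quantities in the statement become $m^+(P_n)=\frac{1}{p}\int_{\TT}\log^+ g\,d\sigma$ and $\log\|P_n\|_p=\frac{1}{p}\log J$. After clearing the factor $\frac{1}{p}$, the proposition is therefore equivalent to the normalized claim
\[
\int_{\TT}\log^+ g\,d\sigma \;\le\; \log J + \frac{1}{e}, \qquad J=\int_{\TT} g\,d\sigma \ge 1,
\]
for any nonnegative integrable $g$; note that the hypothesis $\|P_n\|_p\ge 1$ is exactly $J\ge 1$.

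The engine of the argument is the elementary bound $\log u \le u/e$, valid for all $u>0$ with equality at $u=e$ (the function $u\mapsto u/e-\log u$ attains its unique minimum, equal to $0$, at $u=e$). I would apply this with $u=g/J$ to obtain the pointwise estimate
\[
\log^+ g \;\le\; \frac{g}{eJ} + \log J \qquad (g\ge 0).
\]
On the set $\{g\ge 1\}$ this reads $\log(g/J)\le g/(eJ)$, which is precisely $\log u\le u/e$; on the set $\{g<1\}$ the left-hand side vanishes while the right-hand side is at least $\log J\ge 0$, so the inequality holds everywhere. Integrating against $\sigma$ and using $\int_{\TT}g\,d\sigma=J$ collapses the first term to $J/(eJ)=1/e$, which yields the normalized claim and hence the proposition.

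I expect the only genuine subtlety to lie in the region $\{g<1\}$ (equivalently $\{|P_n|<1\}$). There the appearance of $\log^+$ rather than $\log$ blocks a direct appeal to Jensen's inequality, and the additive constant $\log J$ must be nonnegative in order to dominate the vanishing left-hand side; this is exactly where the hypothesis $J\ge 1$ is used, and it cannot be dropped. The placement of $J$ in the denominator of $g/(eJ)$ is likewise what calibrates the integrated error term to the advertised value $1/e$, and hence to $1/(ep)$ after dividing back by $p$.
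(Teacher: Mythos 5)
Your proof is correct, but it follows a genuinely different route from the paper's. The paper restricts attention to the set $E=\{\theta\colon |P_n(e^{i\theta})|\ge 1\}$, applies Jensen's inequality for the concave function $\log$ with respect to the normalized measure $d\theta/|E|$ on $E$, and then extracts the constant from $\sup_{x\in(0,1]}x\log(1/x)=1/e$, using the hypothesis $\|P_n\|_p\ge 1$ both to guarantee $|E|\neq 0$ and to discard the factor $|E|/(2\pi)\le 1$ multiplying $\log\|P_n\|_p^p$. You instead establish the pointwise tangent-line bound $\log^+ g\le g/(eJ)+\log J$ (from $\log u\le u/e$ applied at $u=g/J$), valid for every $g\ge 0$ once $J\ge 1$, and then simply integrate; your constant $1/e$ arises from $\int g\,d\sigma/(eJ)=1/e$ rather than from an entropy-type supremum. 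The two mechanisms are related, since Jensen's inequality for concave functions is itself a tangent-line argument, but the paper linearizes at the variable point given by the mean over $E$, whereas you linearize at the fixed point $u=e$; your version avoids both the decomposition of $\TT$ and any appeal to Jensen, and it proves the statement for an arbitrary nonnegative integrable function on any probability space, with the hypothesis $J\ge 1$ entering exactly once (to dominate the region where $\log^+$ vanishes). What the paper's argument displays more transparently is where the extremal constant comes from: $1/e$ is the maximum of $x\log(1/x)$, i.e., it reflects the worst-case relative measure of the set where $|P_n|\ge 1$. Both arguments use the hypothesis essentially, and, as you note, it cannot be dropped.
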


\section{Expected discrepancies for random polynomials}\label{Sect3}
\subsection{Expectation of norms and the maximum of coefficients}
In order to estimate the {\it expected discrepancy} of zeros for classes of random polynomials, we apply Propositions \ref{mignotteprop} and \ref{Lpprop}, which requires an estimate on
\[\EE[\log \|P_n\|_2]=\frac{1}{2}\EE\left[\log\left(\sum_{k=0}^n |C_k|^2\right)\right].\]
Expected values of $L^p$ norms and Mahler measures for random polynomials have been considered by a number of authors. For instance,
Fielding (see \cite{F}) has computed $\EE[\log M(P_n)]$ for polynomials with coefficents uniformly distributed on $\TT$; he obtains
\[\EE[\log M(P_n)]=\frac{1}{2}\log(n+1)-\frac{\mathbf{\gamma}}{2}+O(n^{-1/2+\delta})\]
for arbitrary $\delta>0$. Here, $\mathbf{\gamma}$ denotes Euler's constant. Expected values of $L^p$ norms of random polynomials have
also been studied by Borwein and Lockhart, and Choi and Mossinghoff (see \cite{BL, CM}), among others.

It is elementary that
\[\max_{k=0,\ldots, n}|C_k| \leq \left( \sum_{k=0}^n|C_k|^p\right)^{1/p}\leq (n+1)^{1/p}\max_{k=0,\ldots, n}|C_k|,\]
and from this it follows that
\begin{equation}
\EE[\log(\max_k|C_k|)]\leq \EE[\log \|P_n\|_2]\leq \frac{1}{2}\log(n+1)+\EE[\log(\max_k |C_k|)].
\label{comparison}
\end{equation}
Hence, we obtain an upper estimate for the discrepancy in terms of
\[\frac{1}{2}\log (n+1)+\EE\left[\log \max_{k=0,\ldots,n}|C_k|\right]-\EE[\log|C_0|],\]
and thus the same order of decay as for bounded coefficients if we can show, for instance, that $\EE\left[\log(\max_{k=0,\ldots,n}|C_k|)\right] = O(\log n)$. On the other hand, the lower bound in \eqref{comparison} means that approaching the expected discrepancy via $\ell^q$ norms of coefficients (which bound $\|P_n\|_p$ from below for $1\leq p \leq 2$) will not work if the expected value is too large; in view of Ibragimov and Zaporozhets' result \cite{IZa}, the roots do not exhibit (almost sure) asymptotic clustering if $\EE[\log^+|C_0|]=\infty$.

\subsection{Expectation of $\log(\max_k|C_k|)$}
We set $R_{C}(r)=\PP(|C_0|\leq r)$ and let $\rho_{C}(r)=R'_{C}(r)$, $r \in [0,\infty)$, denote the density of the non-negative random variable $|C_0|$, which exists and satisfies $\int_0^{\infty} r^t \rho_C(r)dr<\infty$ by our standing assumptions. Indeed, if $\rho(r,\theta)$ is the density of $C_0$ with respect to the area measure, then we can write $\PP(|C_0|\le r)=\int_0^r \left(\int_0^{2\pi} s\,\rho(s,\theta) \,d\theta\right) ds=\int_0^r \rho_C(s)\, ds.$ As is standard in order statistics (see \cite{DaNa}), we now express the density of the random variable $Y_n=\max_{k=0,\ldots, n}|C_k|$ in terms of $\rho_{C}$.

\begin{lem} \label{norderlemma}
Suppose $n\geq 1$. Then the density of $Y_n=\max_{k=0,\ldots,n}|C_k|$ is given by
\begin{equation} \label{3.2}
\rho_{Y_n}(r)=(n+1)\rho_C(r)[R_C(r)]^n.
\end{equation}
\end{lem}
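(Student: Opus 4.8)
The plan is to compute the density of $Y_n = \max_{k=0,\ldots,n}|C_k|$ directly from its cumulative distribution function, which is the standard order-statistics argument for the maximum of independent and identically distributed random variables. First I would identify the relevant random variables: under the standing assumption, the moduli $|C_0|, |C_1|, \ldots, |C_n|$ are independent and identically distributed nonnegative random variables, each with cumulative distribution function $R_C(r) = \PP(|C_0| \le r)$ and density $\rho_C(r) = R_C'(r)$. The key observation is that the event $\{Y_n \le r\}$ occurs if and only if every one of the $n+1$ variables $|C_k|$ satisfies $|C_k| \le r$.

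Next I would write down the cumulative distribution function of $Y_n$. By the equivalence of events just noted and by independence,
\[
R_{Y_n}(r) = \PP(Y_n \le r) = \PP\left(\bigcap_{k=0}^n \{|C_k| \le r\}\right) = \prod_{k=0}^n \PP(|C_k| \le r) = [R_C(r)]^{n+1},
\]
where the last equality uses that the $|C_k|$ are identically distributed. To obtain the density I would differentiate this expression with respect to $r$, which is legitimate since $R_C$ is differentiable by the absolute continuity assumption on the distribution of $C_0$. Applying the chain rule gives
\[
\rho_{Y_n}(r) = \frac{d}{dr}[R_C(r)]^{n+1} = (n+1)[R_C(r)]^n R_C'(r) = (n+1)\rho_C(r)[R_C(r)]^n,
\]
which is exactly the claimed formula \eqref{3.2}.

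I do not expect any serious obstacle here, as this is a textbook order-statistics computation and the necessary regularity (existence of $\rho_C$ and differentiability of $R_C$) is guaranteed by the standing hypothesis of absolutely continuous distribution together with the discussion preceding the lemma that expresses $R_C$ as an integral of $\rho_C$. The only points requiring minor care are the bookkeeping of the index range (there are $n+1$ factors, for $k = 0, 1, \ldots, n$, which produces the exponent $n+1$ and hence the prefactor $n+1$ after differentiation) and the justification that differentiation of the product form is valid, which follows from the absolute continuity assumption. Thus the entire proof reduces to writing the distribution function of the maximum as the $(n+1)$-st power of $R_C$ and differentiating.
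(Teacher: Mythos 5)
Your proof is correct and follows exactly the paper's own argument: write the cumulative distribution function of $Y_n$ as $[R_C(r)]^{n+1}$ using independence and identical distribution of the $|C_k|$, then differentiate. The paper's proof is the same standard order-statistics computation, so there is nothing to add.
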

Using Lemma \ref{norderlemma}, we proceed by estimating
\begin{equation*}
\EE\left[\log Y_n\right]=\int_0^{\infty} (n+1)\rho_C(r)[R_C(r)]^n  \log{r}\, dr,
\label{generallogexp}
\end{equation*}
or equivalently, with $x(u)=R_C^{-1}(u)$,
\begin{equation*}
\EE\left[\log Y_n\right]=\int_0^{1} (n+1) \log(x(u))u^n\,du.
\end{equation*}
We recall that $\EE[|C_0|^t]=\mu<\infty$, and Jensen's inequality now yields
\begin{lem}\label{Elogest}
We have
\begin{equation}
\EE\left[\log Y_n\right]\leq \frac{1}{t}(\log(n+1)+\log\mu).
\label{generallogest}
\end{equation}
\end{lem}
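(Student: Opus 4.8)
The plan is to reduce the estimate to a simple moment bound by exploiting the concavity of the logarithm. First I would rewrite
\[\EE[\log Y_n] = \frac{1}{t}\,\EE\left[\log\left(Y_n^t\right)\right],\]
so that the assumed finiteness of the $t$-th moment of $|C_0|$ becomes directly usable. Since $\log$ is concave and $Y_n^t\ge 0$ with $\EE[Y_n^t]<\infty$ (to be verified below), Jensen's inequality gives
\[\EE\left[\log\left(Y_n^t\right)\right] \le \log\left(\EE[Y_n^t]\right),\]
and hence $\EE[\log Y_n] \le \frac{1}{t}\log\EE[Y_n^t]$.

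The one remaining step is to control the $t$-th moment of the maximum. Here I would use that all the terms are nonnegative, so the maximum is dominated by the sum:
\[Y_n^t = \max_{0\le k\le n}|C_k|^t \le \sum_{k=0}^n |C_k|^t.\]
Taking expectations and using that the $C_k$ are identically distributed with $\EE[|C_0|^t]=\mu$, I obtain $\EE[Y_n^t]\le (n+1)\mu$. In particular this is finite, which retroactively justifies the application of Jensen's inequality above. Substituting into the previous display yields
\[\EE[\log Y_n] \le \frac{1}{t}\log\left((n+1)\mu\right) = \frac{1}{t}\left(\log(n+1)+\log\mu\right),\]
which is the claimed estimate.

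As for obstacles, the argument is genuinely elementary and I expect no serious difficulty. The two points worth handling with a little care are the direction of Jensen's inequality---the concavity of $\log$ is exactly what makes the bound run the correct way for an upper estimate---and the well-definedness of $\EE[\log Y_n]$. The latter is not a real issue: should the integral diverge to $-\infty$, the stated inequality holds trivially, while the positive part is integrable since $\log x \le x^t/t$ for $x>0$, so that $(\log Y_n)^+$ is controlled once $\EE[Y_n^t]<\infty$. Thus the only genuine input is the moment assumption $\EE[|C_0|^t]=\mu<\infty$, which enters through the subadditivity of the maximum relative to the $\ell^t$-sum.
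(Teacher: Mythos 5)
Your proof is correct, and it shares the paper's first step --- Jensen's inequality in the form $\EE[\log Y_n] \le \frac{1}{t}\log \EE[Y_n^t]$ --- but you obtain the key moment bound $\EE[Y_n^t]\le (n+1)\mu$ by a different and more elementary route. The paper derives this bound from the order-statistics density of Lemma \ref{norderlemma}: it writes $\EE[Y_n^t] = \int_0^\infty x^t (n+1)\rho_C(x)[R_C(x)]^n\,dx$ and discards the factor $[R_C(x)]^n\le 1$. You instead use the pointwise domination $\max_k |C_k|^t \le \sum_{k=0}^n |C_k|^t$ and linearity of expectation. The two arguments give the identical constant, but yours is strictly more general: it requires neither independence of the coefficients nor absolute continuity of their distribution, only that each $|C_k|$ has $t$-th moment at most $\mu$. (The paper's route needs both hypotheses, since the factorization $\PP(Y_n\le r)=[R_C(r)]^{n+1}$ and the existence of $\rho_C$ are what produce the density \eqref{3.2}; on the other hand, that density is reused later in the paper for the Gaussian and Pareto computations, so the authors get it essentially for free.) Your additional remarks --- that concavity of $\log$ gives the inequality in the needed direction, and that $\EE[\log Y_n]$ is well defined in $[-\infty,\infty)$ because $(\log Y_n)^+ \le Y_n^t/t$ is integrable --- are accurate and slightly more careful than the paper, which passes over the integrability point in silence.
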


\subsection{Main Results}
Combining Propositions \ref{mignotteprop} and \ref{Lpprop} with \eqref{comparison}, and using Lemma \ref{Elogest}, we obtain the desired expected discrepancy result.
\begin{thm}\label{MainThm}
If the coefficients of $P_n(z)=\sum_{k=0}^{n}C_k z^k$ are iid complex random variables with absolutely continuous distribution and $\EE[|C_0|^t]<\infty$, then we have for all large $n\in\NN$ that
\begin{multline} \label{3.4}
\EE\left[\left| \tau_n\left(A_r(\alpha,\beta)\right) - \frac{\beta-\alpha}{2\pi}\right|\right]\\ \leq
\left(\sqrt{\frac{2\pi}{{\bf k}}} + \frac{2}{1-r} \right)
\sqrt{\frac{\frac{t+2}{2t}\log(n+1) + \frac{1}{t}\log \EE[|C_0|^t]+\frac{1}{2e}-\EE[\log|C_0|]}{n}}.
\end{multline}
\end{thm}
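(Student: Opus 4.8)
The plan is to take expectations directly in the deterministic annular discrepancy estimate of Proposition \ref{mignotteprop}, applied to the normalized polynomial $Q=P_n/\sqrt{|c_0c_n|}$. Since the coefficient distribution is absolutely continuous, $\PP(C_0=0)=\PP(C_n=0)=0$, so $c_0c_n\neq 0$ almost surely and \eqref{mignottediscr} holds pathwise. Writing $D_n$ for the random quantity $\bigl|\tau_n(A_r(\alpha,\beta))-(\beta-\alpha)/(2\pi)\bigr|$, I would take expectations to obtain
\[
\EE[D_n]\leq \sqrt{\frac{2\pi}{{\bf k}}}\,\EE\!\left[\sqrt{\tfrac{1}{n}\,m^+(Q)}\right]+\frac{2}{n(1-r)}\,\EE[m(Q)],
\]
and then reduce everything to a single bound on $\EE[m^+(Q)]$.

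The core computation is to show that $\EE[m^+(Q)]$ is at most the quantity $E:=\frac{t+2}{2t}\log(n+1)+\frac{1}{t}\log\EE[|C_0|^t]+\frac{1}{2e}-\EE[\log|C_0|]$ appearing under the square root. First I would check that Proposition \ref{Lpprop} applies with $p=2$: by AM--GM, $\|Q\|_2^2=\bigl(\sum_k|c_k|^2\bigr)/|c_0c_n|\geq(|c_0|^2+|c_n|^2)/|c_0c_n|\geq 2$ almost surely (here $n\geq1$ makes the indices $0$ and $n$ distinct), so $\|Q\|_2\geq1$ and hence $m^+(Q)\leq\log\|Q\|_2+\frac{1}{2e}=\log\|P_n\|_2-\frac12\log|c_0c_n|+\frac{1}{2e}$ pathwise. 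Taking expectations, the identically-distributed assumption gives $-\frac12\EE[\log|c_0c_n|]=-\EE[\log|C_0|]$, while the upper bound in \eqref{comparison} together with Lemma \ref{Elogest} yields $\EE[\log\|P_n\|_2]\leq\frac12\log(n+1)+\EE[\log Y_n]\leq\bigl(\frac12+\frac1t\bigr)\log(n+1)+\frac1t\log\mu$. Since $\frac12+\frac1t=\frac{t+2}{2t}$, these combine to give exactly $\EE[m^+(Q)]\leq E$. (If $\EE[\log|C_0|]=-\infty$ then $E=+\infty$ and the theorem is vacuous, so I would assume the right-hand side is finite.)

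It then remains to assemble the two terms. For the first, I would use Jensen's inequality — concavity of $\sqrt{\,\cdot\,}$, valid because $m^+(Q)\geq0$ — to pass the expectation through the square root: $\EE[\sqrt{m^+(Q)/n}]\leq\sqrt{\EE[m^+(Q)]/n}\leq\sqrt{E/n}$. For the second term, since $\log\leq\log^+$ we have $m(Q)\leq m^+(Q)$ pathwise, hence $\EE[m(Q)]\leq E$, and I would bound $\frac{2}{n(1-r)}E\leq\frac{2}{1-r}\sqrt{E/n}$. This last inequality holds precisely when $E\leq n$, which is exactly where the phrase ``for all large $n$'' enters: as $E=O(\log n)$, both $E>0$ and $E\leq n$ hold for $n$ large. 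Adding the two estimates produces the prefactor $\sqrt{2\pi/{\bf k}}+2/(1-r)$ multiplying $\sqrt{E/n}$, which is the asserted bound.

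The individual steps are elementary once Propositions \ref{mignotteprop}, \ref{Lpprop}, estimate \eqref{comparison}, and Lemma \ref{Elogest} are in hand; the one genuinely delicate point I would flag as the main obstacle is reconciling the two terms of \eqref{mignottediscr}, which scale differently as $\sqrt{m^+(Q)/n}$ and $m(Q)/n$. Forcing them into a common $\sqrt{\,\cdot/n\,}$ form requires both the pointwise comparison $m(Q)\leq m^+(Q)$ and the observation $E/n\leq\sqrt{E/n}$ for large $n$; without the latter the Mahler-measure term would contribute only at the faster rate $O(\log n/n)$, so the final $\sqrt{\log n/n}$ rate is dictated by the first (Erd\H{o}s--Tur\'an--Mignotte) term, with the Jensen step being what cleanly moves the expectation inside the square root.
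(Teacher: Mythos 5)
Your proof is correct and follows essentially the same route as the paper's: apply Proposition \ref{mignotteprop} pathwise, use Jensen's inequality to move the expectation inside the square root, bound $\EE[m^+(P_n/\sqrt{|C_0C_n|})]$ via Proposition \ref{Lpprop} with $p=2$, \eqref{comparison}, and Lemma \ref{Elogest}, and absorb the Mahler-measure term into the same $\sqrt{E/n}$ bound for large $n$. The only difference is cosmetic: the paper bounds $\EE[m(Q)]$ directly by $\log\|Q\|_2$ via \eqref{1.3} rather than through $m\le m^+$, and it leaves the final absorption step ($E\le n$ for large $n$) implicit, which you spell out explicitly.
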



Note that for any set $E\subset\CC$, the number of zeros of the polynomial $P_n$ in $E$ is given by $n\tau_n(E).$ Thus our results may be stated in terms of the expected number of zeros in certain sets (see \cite[Ch. 5]{Adler} for a general theorem). We give several examples of such statements below. We point out
that for some special families of coefficients, explicit formulas for the density of zeros in a given set are available, see \cite{BR}, \cite{Fa}, and the references therein.
The following result states that the expected number of zeros in any compact set that does not meet $\TT$ is of the order $O(\log{n})$ as $n\to\infty.$

\begin{prop}\label{compactprop}
Let $E\subset\CC$ be a compact set such that $E\cap\TT=\emptyset,$ and set $d:=\textup{dist}(E,\TT).$ The expected number of zeros of $P_n$ in $E$ satisfies
\begin{align} \label{3.5}
\EE\left[n \tau_n(E)\right] \leq
\frac{d+1}{d}
\left(\frac{t+2}{t}\log(n+1) + \frac{2}{t}\log \EE[|C_0|^t] - 2\EE[\log|C_0|]\right).
\end{align}
\end{prop}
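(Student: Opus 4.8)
The plan is to count the zeros of $P_n$ in $E$ directly by means of Jensen's formula, rather than through the annular discrepancy of Theorem~\ref{MainThm}. This is essential: the discrepancy bound controls $\EE[|\tau_n(A_r(\alpha,\beta))-\frac{\beta-\alpha}{2\pi}|]$ only to order $\sqrt{\log n/n}$, and the crude estimate $n\tau_n(E)\le n$ would then yield $\sqrt{n\log n}$ instead of the asserted $O(\log n)$. Jensen's formula, by contrast, produces the logarithmic order automatically.

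First I would localize the zeros radially. Since $\operatorname{dist}(z,\TT)=\bigl||z|-1\bigr|$ for every $z\in\CC$, the hypothesis $\operatorname{dist}(E,\TT)=d$ forces $E\subset\{|z|\le 1-d\}\cup\{|z|\ge 1+d\}$. Consequently $n\tau_n(E)\le N_{\mathrm{in}}+N_{\mathrm{out}}$, where $N_{\mathrm{in}}$ and $N_{\mathrm{out}}$ count the zeros $Z_k$ with $|Z_k|\le 1-d$ and with $|Z_k|\ge 1+d$ respectively. (We may assume $0<d<1$; when $d\ge 1$ the inner set is empty and only $N_{\mathrm{out}}$ contributes, giving an even smaller bound.) Because $C_0$ has an absolutely continuous distribution, $c_0=P_n(0)\ne 0$ and $c_n\ne 0$ almost surely, so Jensen's formula on $|z|=1$ applies and gives $m(P_n)-\log|c_0|=\sum_{|Z_k|\le 1}\log(1/|Z_k|)$. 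Discarding the nonnegative terms with $1-d<|Z_k|\le 1$ and bounding $\log(1/|Z_k|)\ge \log(1/(1-d))$ on the remaining ones yields $N_{\mathrm{in}}\le (m(P_n)-\log|c_0|)/\log(1/(1-d))$. For the outer zeros I would apply the same argument to the reversed polynomial $P_n^*(z)=z^nP_n(1/z)=\sum_k c_{n-k}z^k$, whose nonzero zeros are the reciprocals $1/Z_k$, whose constant term is $c_n$, and which satisfies $m(P_n^*)=m(P_n)$ since $|P_n^*|=|P_n|$ on $\TT$; this gives $N_{\mathrm{out}}\le (m(P_n)-\log|c_n|)/\log(1+d)$.

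Next I would take expectations and insert the estimates already available. Using that the $C_k$ are identically distributed, so $\EE[\log|C_n|]=\EE[\log|C_0|]$, and writing $A:=\EE[m(P_n)]-\EE[\log|C_0|]$, the two Jensen bounds combine to $\EE[n\tau_n(E)]\le A\bigl(1/\log(1/(1-d))+1/\log(1+d)\bigr)$. To control $A$, I bound $m(P_n)\le \log\|P_n\|_2$ pointwise (from $M(P_n)\le\|P_n\|_2$ in \eqref{1.3}--\eqref{1.4}), then apply the upper estimate in \eqref{comparison} followed by Lemma~\ref{Elogest} to obtain $\EE[m(P_n)]\le \frac{t+2}{2t}\log(n+1)+\frac{1}{t}\log\mu$, so that $A\le \frac{t+2}{2t}\log(n+1)+\frac{1}{t}\log\mu-\EE[\log|C_0|]$, exactly half the bracket in \eqref{3.5}. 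Using $m(P_n)$ directly (rather than $m^+$ via Proposition~\ref{Lpprop}) is what removes the $1/(2e)$ term present in Theorem~\ref{MainThm}.

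Finally, it remains to estimate the purely geometric factor. From $-\log(1-u)\ge u$ I get $-\log(1-d)\ge d$ and, with $u=d/(1+d)$, $\log(1+d)\ge d/(1+d)$; hence $1/\log(1/(1-d))\le 1/d$ and $1/\log(1+d)\le (1+d)/d$, so the factor is at most $(2+d)/d\le 2(1+d)/d$. Multiplying by $A$ gives precisely the right-hand side of \eqref{3.5}. I expect no serious obstacle beyond the conceptual one already flagged: the single nontrivial decision is to count zeros with Jensen's formula, after which radial localization together with the geometric-mean bound $M(P_n)\le\|P_n\|_2$ delivers the $O(\log n)$ rate, and the remainder is the bookkeeping above together with the two elementary logarithmic inequalities.
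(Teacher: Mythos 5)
Your proof is correct and follows essentially the same route as the paper: the paper's own argument invokes its Lemma \ref{lemma5.1} (the same Jensen-formula radial counting you re-derive, with the linearization $\log(1/r)\ge 1-r$ in place of your slightly sharper $\log(1/(1-d))$ and $\log(1+d)$), applies it with $r=1/(d+1)$ to the region $\CC\setminus A_r(0,2\pi)\supset E$, and then bounds $\EE[m(P_n/\sqrt{|C_0C_n|})]$ exactly as you bound $A$, via $M(P_n)\le\|P_n\|_2$, \eqref{comparison}, and Lemma \ref{Elogest}. The only differences are cosmetic: you treat the $\log|C_0|$ and $\log|C_n|$ terms separately instead of through the normalization $P_n/\sqrt{|C_0C_n|}$, and your intermediate constant $(2+d)/d$ is marginally sharper before being relaxed to the stated $2(d+1)/d$.
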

If the set $E$ does not have a ``close'' contact with the unit circle $\TT$, then the expected number of zeros in $E$ still remains of the order $o(n)$ as $n\to\infty.$ In particular, we have the following

\begin{prop}\label{polygonprop}
If $E$ is a polygon inscribed in $\TT,$ then the expected number of zeros of $P_n$ in $E$ satisfies
\begin{align} \label{3.6}
\EE\left[n \tau_n(E)\right] = O( \sqrt{n\,\log{n}})\quad \mbox{as } n\to\infty.
\end{align}
\end{prop}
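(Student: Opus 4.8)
\subsection*{Proof proposal}
The plan is to split the polygon according to distance from the unit circle and to treat the two pieces with two different tools already at our disposal. Fix a small parameter $\delta>0$ (to be optimized at the end) and write $E=E_\delta^{\mathrm{in}}\cup E_\delta^{\mathrm{out}}$, where $E_\delta^{\mathrm{out}}=\{z\in E:\mathrm{dist}(z,\TT)\ge\delta\}$ and $E_\delta^{\mathrm{in}}=\{z\in E:\mathrm{dist}(z,\TT)<\delta\}$. The set $E_\delta^{\mathrm{out}}$ is compact and disjoint from $\TT$ with $\mathrm{dist}(E_\delta^{\mathrm{out}},\TT)\ge\delta$, so Proposition~\ref{compactprop} applies directly with $d=\delta$ and yields $\EE[n\tau_n(E_\delta^{\mathrm{out}})]\le\frac{\delta+1}{\delta}\,O(\log n)=O(\log n/\delta)$.

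The geometric heart of the argument is to show that $E_\delta^{\mathrm{in}}$ is concentrated near the finitely many vertices of $E$ and subtends total angle $O(\delta)$. Each edge of $E$ is a chord of $\TT$; away from its endpoints a chord stays at distance at least its sagitta from $\TT$, so once $\delta$ is below the least sagitta (which holds for all large $n$, since we will take $\delta\to0$) the set $E_\delta^{\mathrm{in}}$ reduces to small wedges around the $V$ vertices. Near a vertex placed at $1$, writing $z=1+w$ with $w$ small gives $\mathrm{dist}(z,\TT)\approx-\mathrm{Re}\,w$ and $\arg z\approx\mathrm{Im}\,w$. Each incident edge points into the disk, its direction having strictly negative real part, so on the slab $\{-\delta<\mathrm{Re}\,w<0\}$ the edge sweeps out an interval of $\mathrm{Im}\,w$ of length $O(\delta)$, with implied constant depending only on the vertex angle. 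Summing over the $V$ vertices produces intervals $[\alpha_j,\beta_j)$, $1\le j\le V$, with $E_\delta^{\mathrm{in}}\subseteq\bigcup_j S(\alpha_j,\beta_j)$ and $\sum_j(\beta_j-\alpha_j)=O(\delta)$.

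With this covering in hand I bound the zeros in $E_\delta^{\mathrm{in}}$ purely by angular discrepancy, whose expected bound is radius-free (so the constant $1/(1-r)$ of Theorem~\ref{MainThm} never appears). Indeed $n\tau_n(E_\delta^{\mathrm{in}})\le\sum_{j=1}^V N(\alpha_j,\beta_j)$, and applying \eqref{ETtheorem} in Mignotte's $m^+$ form and then taking expectations exactly as in the proof of Theorem~\ref{MainThm} (via Proposition~\ref{Lpprop}, \eqref{comparison}, Lemma~\ref{Elogest}, and concavity of the square root) gives $\EE\left[\left|N(\alpha_j,\beta_j)/n-(\beta_j-\alpha_j)/(2\pi)\right|\right]\le C\sqrt{\log n/n}$ for each $j$. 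Hence $\EE[n\tau_n(E_\delta^{\mathrm{in}})]\le\frac{n}{2\pi}\sum_j(\beta_j-\alpha_j)+VC\sqrt{n\log n}=O(n\delta)+O(\sqrt{n\log n})$. Combining the two pieces, $\EE[n\tau_n(E)]=O(\log n/\delta)+O(n\delta)+O(\sqrt{n\log n})$, and choosing $\delta\asymp\sqrt{\log n/n}$ balances the first two terms at $O(\sqrt{n\log n})$, which gives the claim.

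I expect the main obstacle to be the geometric estimate that $E_\delta^{\mathrm{in}}$ subtends angle $O(\delta)$: one must check that the implied constant depends only on the fixed vertex angles of $E$ and not on $n$, and that for large $n$ the value $\delta\asymp\sqrt{\log n/n}$ is genuinely below the minimal sagitta, so that $E_\delta^{\mathrm{in}}$ really decomposes into vertex wedges and no contribution from the middle of an edge is overlooked. Everything else is the routine balancing of $\delta$ and a reuse of the expectation estimates underlying Theorem~\ref{MainThm}.
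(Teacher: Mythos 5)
Your proposal is correct and takes essentially the same route as the paper: the paper likewise covers the polygon by the disk $\{z:|z|\le 1-\sqrt{\log n/n}\}$, handled by Proposition~\ref{compactprop}, together with sectors of angular width $O(\sqrt{\log n/n})$ about the vertices, handled by Mignotte's angular estimate combined with the expectation bound \eqref{5.4}. Your free parameter $\delta$, once optimized, is exactly the paper's choice $\sqrt{\log n/n}$, so the two arguments coincide.
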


An estimate of this type for deterministic polynomials with restricted coefficients was proved in \cite{BE}. Another estimate for the zeros of deterministic polynomials in the disks $D_r(w)=\{z\in\CC:|z-w|<r\},\ w\in\TT,$ is contained in \cite{BEL}. We provide an analogue for the random polynomials below.

\begin{prop}\label{diskprop}
For  $D_r(w)$ with $\ w\in\TT,\ r<2,$ the expected number of zeros of $P_n$ in $D_r(w)$ satisfies
\begin{align} \label{3.7}
\EE\left[n \tau_n(D_r(w))\right] =  \frac{2 \arcsin(r/2)}{\pi}\, n + O\left(\sqrt{n\log{n}}\right)\quad \mbox{as } n\to\infty.
\end{align}
\end{prop}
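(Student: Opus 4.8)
The plan is to establish \eqref{3.7} in exact parallel with the proof of Theorem \ref{MainThm}, simply replacing the annular--sector discrepancy of Proposition \ref{mignotteprop} by the corresponding deterministic estimate for disks centered on $\TT$. First I would record the geometric fact that the linear coefficient in \eqref{3.7} is precisely the normalized arc length of $\TT$ captured by the disk. Writing $w=e^{i\phi_0}$ and $z=e^{i\theta}\in\TT$, we have $|z-w|=2|\sin((\theta-\phi_0)/2)|$, so $z\in D_r(w)$ is equivalent to $|\theta-\phi_0|<2\arcsin(r/2)$. Hence $D_r(w)\cap\TT$ is an arc $I$ of length $4\arcsin(r/2)$, and $\mu_{\TT}(D_r(w))=\mu_{\TT}(I)=\tfrac{2\arcsin(r/2)}{\pi}$, exactly the main term in \eqref{3.7}. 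The hypothesis $r<2$ ensures $2\arcsin(r/2)<\pi$, so $I$ is a proper subarc.

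The deterministic input is the disk analogue of Proposition \ref{mignotteprop}, due to Borwein, Erd\'elyi, and Littmann \cite{BEL}: for any $P_n$ with $c_0c_n\neq0$ there exist constants $C_1(r),C_2(r)$ with
\[
\left| n\tau_n(D_r(w))-\frac{2\arcsin(r/2)}{\pi}\,n\right|
\le C_1(r)\sqrt{n\,m^+\!\left(\frac{P_n}{\sqrt{|c_0c_n|}}\right)}
+C_2(r)\,m\!\left(\frac{P_n}{\sqrt{|c_0c_n|}}\right).
\]
As in Mignotte's estimate, the first term originates from the angular Erd\H{o}s--Tur\'an--Ganelius discrepancy and the second, radial, term from Jensen's formula. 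It is this global treatment of the radial direction, rather than a shell--by--shell application of the annular estimate, that keeps the error at the level $\sqrt{n\,m^+}$ near the two points where $\partial D_r(w)$ meets $\TT$ transversally.

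Taking expectations and using concavity of the square root (Jensen's inequality), I would bound
\[
\EE\!\left[\left| n\tau_n(D_r(w))-\tfrac{2\arcsin(r/2)}{\pi}n\right|\right]
\le C_1(r)\sqrt{n\,\EE\!\left[m^+\!\left(\tfrac{P_n}{\sqrt{|C_0C_n|}}\right)\right]}
+C_2(r)\,\EE\!\left[m\!\left(\tfrac{P_n}{\sqrt{|C_0C_n|}}\right)\right].
\]
Since the coefficients are absolutely continuous, $C_0C_n\neq0$ almost surely, so both expectations are well defined, and they are controlled exactly as in Section \ref{Sect3}. Because $\|P_n\|_2\ge M(P_n)\ge\sqrt{|C_0C_n|}$ by \eqref{1.3}, the normalized polynomial $P_n/\sqrt{|C_0C_n|}$ has $L^2$ norm at least $1$, so Proposition \ref{Lpprop} with $p=2$ applies and gives, together with \eqref{comparison} and Lemma \ref{Elogest}, the bound $\EE[m^+(P_n/\sqrt{|C_0C_n|})]\le \EE[\log\|P_n\|_2]+\tfrac{1}{2e}-\EE[\log|C_0|]=O(\log n)$; by \eqref{1.3}, \eqref{1.4} the quantity $m(P_n/\sqrt{|C_0C_n|})$ is nonnegative and likewise has expectation $O(\log n)$. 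Substituting yields $C_1(r)\sqrt{n\cdot O(\log n)}+C_2(r)\,O(\log n)=O(\sqrt{n\log n})$, and since $\bigl|\EE[n\tau_n(D_r(w))]-\tfrac{2\arcsin(r/2)}{\pi}n\bigr|\le \EE[|\cdots|]$, this is \eqref{3.7}.

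The main obstacle is the deterministic estimate itself. Unlike in Propositions \ref{compactprop} and \ref{polygonprop}, a disk centered on $\TT$ captures a genuine arc of the limiting measure, so the linear term cannot be discarded, and the two crossings of $\partial D_r(w)$ with $\TT$ generate thin cusp--shaped regions of ambiguity. A naive sandwiching of $D_r(w)$ between annular sectors via Proposition \ref{mignotteprop} fails here: forcing the angular error below $\sqrt{\log n/n}$ requires an annulus of thickness $\sim\sqrt{\log n/n}$, on which the factor $2/(1-r)$ degrades the radial contribution to order $n$. The Borwein--Erd\'elyi--Littmann estimate avoids this by handling the radial direction globally through Jensen's formula; granting it, the probabilistic half of the argument is a verbatim repetition of the passage from Proposition \ref{mignotteprop} to Theorem \ref{MainThm}. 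If instead one insisted on proving the geometric estimate here, essentially all of the work would lie in bounding the expected number of zeros in the two endpoint cusps by $O(\sqrt{n\log n})$.
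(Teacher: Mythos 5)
Your proposal has a genuine gap at its core: the deterministic ``disk analogue of Proposition \ref{mignotteprop}'' that you attribute to \cite{BEL} is not available as a black box. The Borwein--Erd\'elyi--Littmann paper concerns polynomials with coefficients from a \emph{finite set}; it does not contain a general inequality bounding $\bigl|n\tau_n(D_r(w))-\tfrac{2\arcsin(r/2)}{\pi}n\bigr|$ in terms of $m^+$ and $m$ for arbitrary polynomials with $c_0c_n\neq 0$, and the random coefficients here are unbounded and can be arbitrarily small, so no finite-set result applies directly. This is precisely why the paper describes its Proposition \ref{diskprop} as an \emph{analogue} of the estimate in \cite{BEL}, not a corollary of it. Since you explicitly grant this estimate (``the main obstacle is the deterministic estimate itself''), your argument assumes the step where essentially all the work lies; the expectation half you do carry out is a correct repetition of the passage from Proposition \ref{mignotteprop} to Theorem \ref{MainThm}, but it is not the substance of the proposition.

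Moreover, your reason for rejecting the sandwiching route rests on a miscalculation, and that route is exactly the paper's proof. Write $e^{i\alpha},e^{i\beta}$ for the endpoints of $D_r(w)\cap\TT$, so $\beta-\alpha=4\arcsin(r/2)$, and cover $D_r(w)$ by the sector $S=\{z:\alpha-C\sqrt{\log n/n}<\arg z<\beta+C\sqrt{\log n/n}\}$ together with $F=\{z:|z|\le 1-\sqrt{\log n/n}\ \mbox{or}\ |z|\ge 1+\sqrt{\log n/n}\}$; the widening by $C\sqrt{\log n/n}$ suffices because for $r<2$ the circle $\partial D_r(w)$ crosses $\TT$ transversally. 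For $F$, Proposition \ref{compactprop} with $d\sim\sqrt{\log n/n}$ gives $\EE[n\tau_n(F)]\le \frac{d+1}{d}\,O(\log n)=O(\sqrt{n\log n})$: the factor you worried about is $2/(1-\rho)$ with $\rho$ the inner radius of the protecting annulus, so $2/(1-\rho)=O(\sqrt{n/\log n})$, and it multiplies $\EE[m(P_n/\sqrt{|C_0C_n|})]=O(\log n)$, yielding $O(\sqrt{n\log n})$ --- not order $n$ as you claim. For $S$, Mignotte's angular estimate together with \eqref{5.4} gives $\EE[n\tau_n(S)]=\frac{\beta-\alpha}{2\pi}n+O(\sqrt{n\log n})$, the extra angular width contributing only $\frac{C}{\pi}n\sqrt{\log n/n}=O(\sqrt{n\log n})$. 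The upper bound in \eqref{3.7} then follows from $\tau_n(D_r(w))\le\tau_n(S)+\tau_n(F)$, and the lower bound from the symmetric inclusion of the shrunken sector intersected with the thin annulus inside $D_r(w)$. Incidentally, this same sandwich, run deterministically with annulus thickness of order $\sqrt{m^+/n}$, would prove the very inequality you wanted to cite; as written, your proposal neither proves it nor correctly identifies why the elementary approach succeeds.
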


\subsection{Remarks on the non-iid case}
Under additional assumptions on $\rho_C$, we can say more. For instance, it is known (see \cite[Chapter 4]{DaNa}) that if $\textrm{Var}(|C_0|)=\sigma^2<\infty$, then
\[\EE[Y_n]\leq \mu+\sigma\frac{n}{\sqrt{2n+1}},\]
and that equality can be achieved for each $n$ with a particular choice of distribution. In
this case, \eqref{generallogest} takes on the asymptotic form
$\EE[\log Y_n^C]\leq (1/2)\log n+O(1)$.

Moreover, the requirement that $C_0, C_1, \ldots$ be independent and identically distributed can be dropped if the second moments of their moduli are finite.
Namely, if
\begin{equation}
\EE[|C_0|]=\EE[|C_1|]=\cdots=\mu \quad \textrm{and}\quad
\textrm{Var}(|C_0|)=\textrm{Var}(|C_1|)=\cdots=\sigma^2,\label{noniidconditions}
\end{equation}
then it follows from a result of Arnold and Groeneveld (see
\cite[Chapter 5]{DaNa}) that
\begin{equation}
\EE[Y_n]\leq \mu+\sigma \sqrt{n}.\label{Enoniid}
\end{equation}
Returning once more to \eqref{comparison}, and applying Jensen's inequality, we
deduce the following version of our result on expected discrepancies (which can then be applied to extend the other results in the previous
subsection).
\begin{thm}\label{MainThmB}
 If the (not necessarily iid) coefficients of $P_n(z)=\sum_{k=0}^{n}C_kz^k$ have absolutely continuous distributions, and satisfy \eqref{noniidconditions}, then
\begin{multline*}
\EE\left[\left| \tau_n\left(A_r(\alpha,\beta)\right) - \frac{\beta-\alpha}{2\pi}\right|\right]\\ \leq
\left(\sqrt{\frac{2\pi}{{\bf k}}} + \frac{2}{1-r} \right)
\sqrt{\frac{\log(n+1)-\frac{1}{2}\EE[\log|C_0|]-\frac{1}{2}\EE[\log |C_n|]+O(1)}{n}},
\end{multline*}
as $n \to \infty$.
\end{thm}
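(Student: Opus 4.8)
The plan is to follow the template of the proof of Theorem~\ref{MainThm}, the only substantive change being the way $\EE[\log Y_n]$ is controlled: here we cannot appeal to the order-statistics density of Lemma~\ref{norderlemma}, and must instead use the Arnold--Groeneveld bound \eqref{Enoniid}, which requires only that the moduli $|C_k|$ share a common mean and variance. Since the $C_k$ have absolutely continuous distributions, $C_0 C_n \neq 0$ almost surely, so Proposition~\ref{mignotteprop} applies to almost every realization of $P_n$.

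First I would reduce the two Mahler-type quantities in \eqref{mignottediscr} to a single random variable
\[
W_n = \log\|P_n\|_2 - \tfrac12\log|C_0| - \tfrac12\log|C_n| + \tfrac{1}{2e}.
\]
Because $\|P_n\|_2^2 = \sum_{k=0}^n|C_k|^2 \ge 2|C_0C_n| \ge |C_0C_n|$, the normalized polynomial satisfies $\|P_n/\sqrt{|C_0C_n|}\|_2 \ge 1$, so Proposition~\ref{Lpprop} with $p=2$ gives $m^+(P_n/\sqrt{|C_0C_n|}) \le W_n$; and \eqref{1.3} together with $M(P_n) \ge \max(|C_0|,|C_n|) \ge \sqrt{|C_0C_n|}$ (a consequence of Jensen's formula) gives $0 \le m(P_n/\sqrt{|C_0C_n|}) \le W_n$ as well. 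Substituting these two bounds into \eqref{mignottediscr} and taking expectations, the concavity of the square root (Jensen's inequality) turns the first term into $\sqrt{2\pi/{\bf k}}\,\sqrt{\EE[W_n]/n}$, while the second becomes $\tfrac{2}{1-r}\,\EE[W_n]/n$. Since $\EE[W_n]$ is a \emph{deterministic} quantity of size $O(\log n)$ (as shown next), we have $\EE[W_n]/n \le \sqrt{\EE[W_n]/n}$ for all large $n$, and the two terms combine into
\[
\EE\left[\left|\tau_n(A_r(\alpha,\beta)) - \tfrac{\beta-\alpha}{2\pi}\right|\right] \le \left(\sqrt{\tfrac{2\pi}{{\bf k}}} + \tfrac{2}{1-r}\right)\sqrt{\tfrac{\EE[W_n]}{n}}.
\]

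It then remains to estimate $\EE[W_n] = \EE[\log\|P_n\|_2] - \tfrac12\EE[\log|C_0|] - \tfrac12\EE[\log|C_n|] + \tfrac{1}{2e}$. The right-hand inequality of \eqref{comparison} bounds $\EE[\log\|P_n\|_2]$ by $\tfrac12\log(n+1) + \EE[\log Y_n]$, so everything comes down to $\EE[\log Y_n]$. Here I would invoke \eqref{Enoniid}: under the hypotheses \eqref{noniidconditions}, $\EE[Y_n] \le \mu + \sigma\sqrt{n}$, and one more application of Jensen's inequality (concavity of $\log$) yields $\EE[\log Y_n] \le \log\EE[Y_n] \le \log(\mu + \sigma\sqrt{n}) = \tfrac12\log n + O(1)$. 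Combining, $\EE[\log\|P_n\|_2] \le \tfrac12\log(n+1) + \tfrac12\log n + O(1) = \log(n+1) + O(1)$, whence $\EE[W_n] \le \log(n+1) - \tfrac12\EE[\log|C_0|] - \tfrac12\EE[\log|C_n|] + O(1)$, which is exactly the expression inside the square root in the statement.

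The main obstacle is precisely the passage from the iid order-statistics computation to the non-iid setting: once one accepts the Arnold--Groeneveld inequality \eqref{Enoniid} as a black box, the remaining argument is mechanical, but the whole improvement rests on the fact that $\EE[Y_n]$ can still be controlled by the common mean and variance alone, with no use of independence. A secondary point to watch is that $\EE[\log|C_0|]$ or $\EE[\log|C_n|]$ may equal $-\infty$ (the moment hypotheses do not preclude mass of $|C_k|$ near the origin), in which case the asserted bound is simply vacuous; when these expectations are finite, the $O(1)$ term absorbs the constants $\tfrac{1}{2e}$, the $\tfrac12\log 2$ coming from $n(n+1)\le 2n^2$, and $\log(\sigma + \mu/\sqrt{n}) = \log\sigma + o(1)$.
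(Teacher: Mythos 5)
Your proposal is correct and follows essentially the same route as the paper's own proof: Proposition \ref{mignotteprop} plus Jensen's inequality, reduction to $\EE\left[\log \left\|P_n/\sqrt{|C_0C_n|}\right\|_2\right]$ via Proposition \ref{Lpprop} and \eqref{1.3}, the pointwise bound \eqref{comparison}, and then the Arnold--Groeneveld inequality \eqref{Enoniid} with Jensen's inequality in place of Lemma \ref{Elogest}. Your write-up merely makes explicit some details the paper leaves implicit (the verification that $\left\|P_n/\sqrt{|C_0C_n|}\right\|_2 \ge 1$, the absorption of the $\frac{2}{n(1-r)}\EE[W_n]$ term, and the possible degeneracy when $\EE[\log|C_0|]$ or $\EE[\log|C_n|]$ is $-\infty$).
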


\section{Examples}\label{Sect4}
\subsection{Gaussian coefficients}
Let $C_0, C_1,\ldots$ be iid with density $\rho^G$ given by
\begin{equation}
d\mu^G(z)=\rho^G(z)dA(z)=e^{- r^2}\frac{r dr d\theta}{\pi};
\label{gaussdens}
\end{equation}
that is, the coefficients are simply centered two-dimensional Gaussians. We readily
compute that $\EE[\log |C_0|]=-\gamma/2$.

We determine the asymptotics of $\EE[\log(Y_n^G)]$ by elementary computations.
\begin{lem}\label{gausslemma}
Let $n\geq 1$. Then
\begin{equation*}
\EE[\log Y^G_n]=2(n+1)\int_0^{\infty}x \log x \,e^{-x^2}(1-e^{-x^2})^ndx,
\label{gaussexpect}
\end{equation*}
and, asymptotically,
\begin{equation*}
\EE[\log Y^G_n]=-\frac{\gamma}{2}+\frac{1}{2}\sum_{k=2}^{n+1} (-1)^k\left(\begin{array}{c}n+1\\k\end{array}\right)\log k \leq \log\log n+O(1), \quad n\rightarrow \infty.
\label{gaussasymptotics}
\end{equation*}
\end{lem}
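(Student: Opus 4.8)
The plan is to extract the one-dimensional radial data from \eqref{gaussdens}, substitute into the order-statistics formula of Lemma~\ref{norderlemma}, and then evaluate the resulting integral in two ways: exactly, to obtain the alternating binomial sum, and via a probabilistic interpretation, to obtain the asymptotic bound.

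First I would record the radial distribution. Integrating \eqref{gaussdens} over $\theta$ gives the density $\rho_C(r)=2re^{-r^2}$ and hence $R_C(r)=\PP(|C_0|\le r)=1-e^{-r^2}$ for $r\ge 0$. Lemma~\ref{norderlemma} then yields
\[\rho_{Y_n}(r)=(n+1)\,2re^{-r^2}(1-e^{-r^2})^n,\]
and multiplying by $\log r$ and integrating produces the first displayed formula. This part is routine. As a consistency check, the formula at $n=0$ reduces to $\EE[\log|C_0|]=2\int_0^\infty x\log x\,e^{-x^2}\,dx=-\gamma/2$, matching the value quoted just before the lemma.

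For the exact sum, I would expand $(1-e^{-x^2})^n=\sum_{j=0}^n\binom{n}{j}(-1)^je^{-jx^2}$ and interchange the (finite) sum with the integral, reducing everything to the single Gaussian moment $J(a)=\int_0^\infty x\log x\,e^{-ax^2}\,dx$ with $a=j+1$. The key computation is $J(a)=-\frac{1}{4a}(\log a+\gamma)$, which I would obtain by differentiating $\int_0^\infty x^se^{-ax^2}\,dx=\tfrac12 a^{-(s+1)/2}\Gamma(\tfrac{s+1}{2})$ in $s$ at $s=1$ and using $\psi(1)=-\gamma$ (equivalently, substitute $u=x^2$ and differentiate $\Gamma$ at $1$). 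Feeding this back in, the combinatorial identity $\binom{n}{j}\frac{1}{j+1}=\frac{1}{n+1}\binom{n+1}{j+1}$ absorbs the prefactor $2(n+1)$, and after reindexing $k=j+1$ one arrives at $\EE[\log Y_n^G]=\frac12\sum_{k=1}^{n+1}\binom{n+1}{k}(-1)^k(\log k+\gamma)$. The $\gamma$-part sums to $-\gamma/2$ since $\sum_{k=0}^{n+1}\binom{n+1}{k}(-1)^k=0$ forces $\sum_{k=1}^{n+1}\binom{n+1}{k}(-1)^k=-1$; the $k=1$ term of the $\log$-part vanishes because $\log 1=0$, leaving exactly the stated sum from $k=2$.

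The hard part is the asymptotic bound, and here I would deliberately avoid estimating the alternating sum directly, since the sign cancellations make its size delicate to control. Instead I would exploit the probabilistic description: because $R_C(r)=1-e^{-r^2}$, the variables $|C_k|^2$ are iid standard exponentials, so $Y_n^2$ is the maximum of $n+1$ iid $\mathrm{Exp}(1)$ variables and $\EE[Y_n^2]=H_{n+1}=\sum_{j=1}^{n+1}1/j$. Jensen's inequality (concavity of $\log$) then gives $\EE[\log Y_n^G]=\tfrac12\EE[\log Y_n^2]\le\tfrac12\log H_{n+1}$, and since $H_{n+1}=\log(n+1)+\gamma+O(1/n)$ this is $\le\tfrac12\log\log n+O(1)\le\log\log n+O(1)$, as required. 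The only points needing care are the Gamma-derivative evaluation in Step~2 and the bookkeeping in the binomial reindexing; the asymptotic estimate, once routed through the order statistics of the exponentials, becomes essentially immediate.
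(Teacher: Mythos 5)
Your proposal is correct, and while the exact-formula part matches the paper, your asymptotic argument takes a genuinely different route. For the first display and the alternating sum you do essentially what the paper does: Lemma~\ref{norderlemma} with the radial density $2re^{-r^2}$, then the binomial expansion — which the paper dismisses as ``readily obtained'' and you work out in full via $\int_0^\infty x\log x\,e^{-ax^2}\,dx=-\frac{1}{4a}(\log a+\gamma)$, the identity $\binom{n}{j}\frac{1}{j+1}=\frac{1}{n+1}\binom{n+1}{j+1}$, and the vanishing alternating sum of binomial coefficients; that bookkeeping is right. For the bound $\le\log\log n+O(1)$, however, the paper proceeds analytically: it substitutes $u=e^{-x^2}$ to get $\EE[\log Y_n^G]=\frac{n+1}{2}\int_0^1\log\log(1/u)\,(1-u)^n\,du$, splits the integral at $u=1/(n+1)^2$, bounds the main piece using monotonicity of $\log\log(1/u)$, and shows the piece near $0$ is $o(1)$ via the logarithmic integral and L'H\^opital's rule. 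You instead observe that $|C_k|^2$ are iid $\mathrm{Exp}(1)$, so $Y_n^2$ is a maximum of $n+1$ standard exponentials with $\EE[Y_n^2]=\mathbf{H}_{n+1}$ (incidentally the same Euler formula $\mathbf{H}_m=\int_0^1\frac{1-u^m}{1-u}\,du$ that the paper invokes for the Pareto case), and Jensen then gives $\EE[\log Y_n^G]\le\frac12\log \mathbf{H}_{n+1}=\frac12\log\log n+O(1)$. Both arguments produce the same bound with the same leading constant $\frac12$; yours is shorter and explains conceptually where $\log\log n$ comes from (the logarithm of the expected maximum of exponentials), while the paper's integral-splitting is self-contained and would transfer to radial densities that lack such a clean probabilistic structure.
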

We obtain a slightly better expected discrepancy result (cf. \cite[Sect. 3]{Arn}).
\begin{prop}
For large enough $n$,
\begin{equation*}
\EE\left[\left| \tau_n\left(A_r(\alpha,\beta)\right) - \frac{\beta-\alpha}{2\pi}\right|\right]\leq
\left(\sqrt{\frac{2\pi}{{\bf k}}} +\frac{2}{1-r} \right)
\sqrt{\frac{\log n+\log\log n+O(1)}{n}}.
\end{equation*}
\end{prop}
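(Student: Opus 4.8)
The plan is to feed the Gaussian-specific estimate of Lemma \ref{gausslemma} into the deterministic machinery of Section \ref{Sect2}, proceeding exactly as in the proof of Theorem \ref{MainThm} but replacing the crude bound of Lemma \ref{Elogest} by the sharper $\EE[\log Y_n^G]\le \log\log n + O(1)$. Concretely, I would start from Proposition \ref{mignotteprop} applied to $P_n=\sum_{k=0}^n C_k z^k$ (note $C_0C_n\neq 0$ almost surely, by absolute continuity), take expectations, and use the concavity of $t\mapsto\sqrt{t}$ together with Jensen's inequality to move the expectation inside the square root in the $m^+$ term:
\[
\EE\left[\sqrt{\tfrac{1}{n}\, m^+\!\left(\tfrac{P_n}{\sqrt{|C_0 C_n|}}\right)}\right] \le \sqrt{\tfrac{1}{n}\, \EE\left[m^+\!\left(\tfrac{P_n}{\sqrt{|C_0 C_n|}}\right)\right]}.
\]

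Next I would control $\EE[m^+(P_n/\sqrt{|C_0 C_n|})]$. Since $\|P_n\|_2^2=\sum_k|C_k|^2\ge \max_k|C_k|^2\ge |C_0 C_n|$, the normalized polynomial satisfies $\|P_n/\sqrt{|C_0 C_n|}\|_2\ge 1$, so Proposition \ref{Lpprop} with $p=2$ applies and gives
\[
m^+\!\left(\tfrac{P_n}{\sqrt{|C_0 C_n|}}\right) \le \log\|P_n\|_2 - \tfrac{1}{2}\log|C_0 C_n| + \tfrac{1}{2e}.
\]
Taking expectations and invoking the comparison \eqref{comparison}, namely $\EE[\log\|P_n\|_2]\le \frac{1}{2}\log(n+1)+\EE[\log Y_n^G]$, then Lemma \ref{gausslemma}, and finally $\EE[\log|C_0 C_n|]=2\EE[\log|C_0|]=-\gamma$ (the computation $\EE[\log|C_0|]=-\gamma/2$ having already been recorded), I obtain
\[
\EE\left[m^+\!\left(\tfrac{P_n}{\sqrt{|C_0 C_n|}}\right)\right] \le \tfrac{1}{2}\log(n+1) + \log\log n + O(1).
\]

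It remains to absorb the second, linear, term of Proposition \ref{mignotteprop}. Here I would first note that Jensen's formula gives $M(P_n)\ge \max(|C_0|,|C_n|)\ge \sqrt{|C_0 C_n|}$, so that $m(P_n/\sqrt{|C_0 C_n|})\ge 0$, while \eqref{1.4} yields $m(\cdot)\le m^+(\cdot)$; hence $0\le \EE[m(P_n/\sqrt{|C_0 C_n|})]\le \EE[m^+(P_n/\sqrt{|C_0 C_n|})]$. Since the bound above shows $\frac{1}{n}\EE[m^+(P_n/\sqrt{|C_0 C_n|})]\le 1$ for all large $n$, the elementary inequality $x\le\sqrt{x}$ for $0\le x\le 1$ lets me dominate $\frac{1}{n}\EE[m(\cdots)]$ by $\sqrt{\frac{1}{n}\EE[m^+(\cdots)]}$. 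Combining the two terms then produces the common prefactor $\sqrt{2\pi/{\bf k}}+2/(1-r)$ multiplying a single square root, and substituting the displayed estimate for $\EE[m^+]$ gives the assertion (with leading term $\frac{1}{2}\log n$, a fortiori $\le \log n$, inside the root).

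The genuine analytic content---the replacement of the $O(\log n)$ bound of Lemma \ref{Elogest} by the $O(\log\log n)$ bound---lives entirely in Lemma \ref{gausslemma}, which I am taking as given; the remaining work is bookkeeping. The only steps requiring care are the almost-sure verifications that the hypothesis of Proposition \ref{Lpprop} holds (the normalization $\|P_n/\sqrt{|C_0 C_n|}\|_2\ge 1$) and that the Mahler measure term is nonnegative and bounded by $m^+$, together with the observation that the passage from the two-term estimate of Proposition \ref{mignotteprop} to a single square root is legitimate only once $n$ is large enough that $\frac{1}{n}\EE[m^+(\cdots)]\le 1$---which is precisely the ``for large enough $n$'' in the statement. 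I expect no serious obstacle beyond keeping track of these almost-sure conditions and the $O(1)$ constants.
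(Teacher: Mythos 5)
Your proof is correct and is essentially the paper's own (implicit) route: the proposition is obtained exactly as in the proof of Theorem \ref{MainThm}, with Lemma \ref{Elogest} replaced by the Gaussian estimate of Lemma \ref{gausslemma}, including the Jensen step for the $m^+$ term and the absorption of the linear Mahler-measure term into the square root once $n$ is large. The only cosmetic difference is that you bound $m(\cdot)\le m^+(\cdot)$ via \eqref{1.4}, whereas the paper bounds $M(\cdot)\le\|\cdot\|_2$ via \eqref{1.3}; both yield the same estimate, and your observation that the true leading term is $\tfrac12\log n$ (so the stated $\log n$ holds a fortiori) is accurate.
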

\subsection{Heavy tails}
We now turn to the case of heavy-tailed coefficients.

Let $\alpha>1$ and $C_0, C_1, \ldots$ be iid with a Pareto-type density
\begin{equation}
\rho^{P_{\alpha}}(z)=\left\{\begin{array}{cc}\frac{\alpha-1}{2r^{\alpha+1}}, & |z|>1,\\
0, & \textrm{otherwise}.\end{array}\right.
\label{paretodens}
\end{equation}
Note that $\EE[|C_0|]=(\alpha-1)/(\alpha-2)$ for $\alpha>2$ and is infinite otherwise, and that $\EE[\log|C_0|]=\EE[\log^+|C_0|]<\infty$ for every $\alpha>0$.

In view of \eqref{generallogexp}, we obtain the following.
\begin{lem}\label{heavylemma}
Let $n\geq 1$. Then
\begin{equation*}
\EE[\log(Y_n^{P_{\alpha}})]=(\alpha-1)(n+1)\int_1^{\infty}\frac{\log x}{x^{\alpha}}
(1-x^{-(\alpha-1)})^ndx=\frac{1}{\alpha-1}\mathbf{H}_{n+1},
\end{equation*}
where $\mathbf{H}_n=\sum_{k=1}^n 1/k$ is the $n$th harmonic number.
\end{lem}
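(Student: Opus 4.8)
The plan is to apply Lemma \ref{norderlemma} and then reduce the resulting integral to a standard one. First I would extract the radial law of $|C_0|$ from the area density $\rho^{P_{\alpha}}$: since the density is radially symmetric, integrating out the angular variable (as in the computation $\PP(|C_0|\le r)=\int_0^r\rho_C(s)\,ds$ of Section \ref{Sect3}) yields the radial density $\rho_C(r)=(\alpha-1)/r^{\alpha}$ for $r>1$ and $0$ otherwise, so that $|C_0|$ follows a Pareto law with $R_C(r)=\PP(|C_0|\le r)=1-r^{-(\alpha-1)}$ for $r\ge 1$. Substituting these two expressions into \eqref{3.2} gives the density of $Y_n=\max_{k\le n}|C_k|$ as $\rho_{Y_n}(r)=(n+1)(\alpha-1)\,r^{-\alpha}(1-r^{-(\alpha-1)})^n$ on $(1,\infty)$; multiplying by $\log r$ and integrating produces exactly the first displayed integral.

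The second equality is the only genuine computation. I would make the change of variables $u=R_C(r)=1-r^{-(\alpha-1)}$, which maps $(1,\infty)$ onto $(0,1)$, satisfies $(\alpha-1)r^{-\alpha}\,dr=du$, and gives $\log r=-\frac{1}{\alpha-1}\log(1-u)$. This turns the expectation into
\[
\EE[\log(Y_n^{P_{\alpha}})]=-\frac{n+1}{\alpha-1}\int_0^1 u^n\log(1-u)\,du,
\]
so everything reduces to evaluating $\int_0^1 u^n\log(1-u)\,du$.

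For this last integral I would expand $\log(1-u)=-\sum_{m\ge 1}u^m/m$ and integrate term by term (justified by Tonelli's theorem, since the terms $u^{n+m}/m$ all have constant sign), obtaining $-\sum_{m\ge 1}\frac{1}{m(m+n+1)}$. The partial-fraction identity $\frac{1}{m(m+n+1)}=\frac{1}{n+1}\left(\frac{1}{m}-\frac{1}{m+n+1}\right)$ then makes the sum telescope: the partial sum up to $M$ equals $\frac{1}{n+1}\big(\mathbf{H}_{n+1}+\mathbf{H}_M-\mathbf{H}_{M+n+1}\big)$, and letting $M\to\infty$ leaves $\frac{1}{n+1}\mathbf{H}_{n+1}$. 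Hence $\int_0^1 u^n\log(1-u)\,du=-\mathbf{H}_{n+1}/(n+1)$, and substituting back cancels the two factors of $(n+1)$ to give $\EE[\log(Y_n^{P_{\alpha}})]=\mathbf{H}_{n+1}/(\alpha-1)$, as claimed.

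The main (and essentially only) obstacle is recognizing the telescoping structure of the partial-fraction sum that produces the harmonic number $\mathbf{H}_{n+1}$; the rest is bookkeeping. Along the way I would verify the support $r\ge 1$, which guarantees $\log r\ge 0$ and keeps all quantities well defined, and confirm the normalization $\int_1^{\infty}\rho_C(r)\,dr=1$ so that the substitution limits $u=0$ and $u=1$ are correct.
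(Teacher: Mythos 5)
Your proposal is correct and follows essentially the same route as the paper: both apply Lemma \ref{norderlemma} to get the density of $Y_n^{P_\alpha}$, make the substitution $u=R_C(r)=1-r^{-(\alpha-1)}$, and reduce the claim to the evaluation $\int_0^1 u^n\log\frac{1}{1-u}\,du=\mathbf{H}_{n+1}/(n+1)$. The only divergence is in that last step, where the paper invokes Euler's formula $\mathbf{H}_{n+1}=\int_0^1\frac{1-u^{n+1}}{1-u}\,du$ plus an integration by parts, whereas you expand $\log(1-u)$ in a power series (justified by Tonelli) and telescope the resulting partial-fraction sum --- both evaluations are standard and equally valid.
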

It is well-known that $\mathbf{H}_n=\log n+O(1)$, and so the
estimate \eqref{generallogest} cannot be improved in general.

\section{Proofs}\label{Sect5}

Let $D_r=\{z\in\CC: |z|<r\},\ r>0$. We need the following consequence of Jensen's formula.
\begin{lem} \label{lemma5.1}
If $P_n(z)=\sum_{k=0}^n c_k z^k,\ c_k\in\CC$ and $c_0c_n\neq 0,$ then for any $r\in(0,1)$ we have
\begin{equation} \label{5.1}
\tau_n\left(\overline{D_r}\right) \leq \frac{m(P_n) - \log|c_0|}{n(1-r)}
\end{equation}
and
\begin{equation} \label{5.2}
\tau_n\left(\CC\setminus D_{1/r}\right) \leq \frac{m(P_n) - \log|c_n|}{n(1-r)}.
\end{equation}
\end{lem}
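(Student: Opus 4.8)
The plan is to apply Jensen's formula on the unit disk and then discard all but the innermost zeros. Write $Z_1,\dots,Z_n$ for the roots of $P_n$, all nonzero since $P_n(0)=c_0\neq 0$. Because $m(P_n)$ is exactly the boundary integral of $\log|P_n|$ over $\TT$, Jensen's formula gives
\[
m(P_n)-\log|c_0| = \sum_{k:\,|Z_k|<1}\log\frac{1}{|Z_k|}.
\]
Every summand here is nonnegative, so I may drop all terms with $|Z_k|>r$ and bound each surviving term from below by $\log(1/r)$, obtaining
\[
\log\frac{1}{r}\cdot\#\{k:\,|Z_k|\le r\} \;\le\; m(P_n)-\log|c_0|.
\]

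Next I would convert the factor $\log(1/r)$ into the stated $1-r$. The elementary inequality $\log x\le x-1$ with $x=r$ yields $\log(1/r)=-\log r\ge 1-r$ for $r\in(0,1)$, hence $1/\log(1/r)\le 1/(1-r)$. Since $\#\{k:|Z_k|\le r\}=n\,\tau_n(\overline{D_r})$, dividing through by $n\log(1/r)$ and applying this inequality produces exactly \eqref{5.1}.

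For \eqref{5.2} I would pass to the reciprocal polynomial $Q_n(z)=z^nP_n(1/z)=\sum_{k=0}^n c_{n-k}z^k$, which has degree $n$ and constant term $c_n\neq 0$, and whose roots are the reciprocals $1/Z_k$. On $\TT$ one has $|Q_n(e^{i\theta})|=|P_n(e^{-i\theta})|$, so the substitution $\theta\mapsto-\theta$ gives $m(Q_n)=m(P_n)$. Applying \eqref{5.1} to $Q_n$ then bounds the proportion of its roots in $\overline{D_r}$ by $(m(P_n)-\log|c_n|)/(n(1-r))$. But $|1/Z_k|\le r$ is equivalent to $|Z_k|\ge 1/r$, so the roots of $Q_n$ in $\overline{D_r}$ are precisely the roots of $P_n$ in $\CC\setminus D_{1/r}$; that is, the two counting measures agree on these sets. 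This yields \eqref{5.2}.

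The only genuinely delicate point is the bookkeeping at the circle $|z|=1$ in Jensen's formula: any roots lying on $\TT$ contribute $\log 1=0$, and the logarithmic singularities of $\log|P_n|$ remain integrable there, so neither the formula nor the subsequent estimates are affected by boundary roots. Beyond that, the argument rests entirely on the elementary inequality $\log(1/r)\ge 1-r$ and the standard reversal symmetry $m(Q_n)=m(P_n)$ of the Mahler measure, so I expect no serious obstacle.
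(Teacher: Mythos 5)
Your proof is correct and takes essentially the same route as the paper's: Jensen's formula combined with the elementary bound $\log(1/r)\ge 1-r$ yields \eqref{5.1}, and then \eqref{5.2} is reduced to \eqref{5.1} via a reciprocal polynomial whose Mahler measure equals $m(P_n)$. The only cosmetic difference is that you use the plain reciprocal $z^nP_n(1/z)$ while the paper uses the conjugate-reciprocal $z^n\overline{P_n(1/\bar z)}$; both arguments are identical in substance.
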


\begin{proof}
Let $P_n(z)= c_n \prod_{k=1}^n (z-z_k).$ Using Jensen's formula, we obtain that
\begin{align*}
m(P_n)-\log|c_0| &= \frac{1}{2\pi}\int_0^{2\pi}\log|P_n(e^{i\theta})|d\theta -\log|c_0| = \sum_{|z_k|<1} \log\frac{1}{|z_k|} \\ &\ge \sum_{|z_k|\le r} \log\frac{1}{|z_k|} \ge n \tau_n\left(\overline{D_r}\right) \log\frac{1}{r} \ge n \tau_n\left(\overline{D_r}\right) (1-r).
\end{align*}
Thus the first estimate follows, and we can apply it to the reciprocal polynomial $P_n^*(z)=z^n \overline{P_n(1/\bar z)}.$ Note that the zeros of $P_n^*$ are $1/\bar z_k,\ k=1,\ldots,n,$ and its constant term is $\bar c_n.$ Since  $|P_n^*(z)|=|P_n(z)|$ for $|z|=1,$ we have that $m(P_n^*)=m(P_n)$. Hence \eqref{5.1} applied to $P_n^*$ now gives that
\[\tau_n\left(\CC\setminus D_{1/r}\right) \leq \frac{m(P_n^*) - \log|\bar c_n|}{n(1-r)} = \frac{m(P_n) - \log|c_n|}{n(1-r)}.\]
\end{proof}

\begin{proof}[Proof of Proposition \ref{mignotteprop}]
Under the assumptions of this proposition, the result of Mignotte \cite{M} (see also \cite{AM}) gives
\begin{equation*}
\left| \tau_n\left(S(\alpha,\beta)\right) - \frac{\beta-\alpha}{2\pi}\right|\leq
\sqrt{\frac{2\pi}{{\bf k}}}\, \sqrt{\frac{1}{n}\,m^+\left(\frac{P_n}{\sqrt{|c_0c_n|}}\right)}.
\end{equation*}
On the other hand, applying Lemma \ref{lemma5.1}, we obtain that
\begin{align} \label{5.3}
\tau_n\left(\CC\setminus A_r(\alpha,\beta)\right) &\leq \frac{2}{n(1-r)} \, m\left(\frac{P_n}{\sqrt{|c_0c_n|}}\right).
\end{align}
Since $\tau_n\left(A_r(\alpha,\beta)\right) =  \tau_n\left(S(\alpha,\beta)\right) - \tau_n\left(\CC\setminus A_r(\alpha,\beta)\right),$ \eqref{mignottediscr} follows as a combination of the above estimates.
\end{proof}

\begin{proof}[Proof of Proposition \ref{Lpprop}]
Consider the set $E:=\{\theta\in[0,2\pi): |P_n(e^{i\theta})| \ge 1\},$ and denote its length by $|E|.$ Our assumption $\|P_n\|_p \ge 1$ implies that $|E|\neq 0$. We use concavity of $\log$ and Jensen's inequality in the following estimate
\begin{align*}
m^+(P_n) &= \frac{1}{2\pi}\int_0^{2\pi}\log^+|P_n(e^{i\theta})|\,d\theta = \frac{1}{2\pi p}\int_E \log|P_n(e^{i\theta})|^p\,d\theta \\ &= \frac{|E|}{2\pi p}\int_E \log|P_n(e^{i\theta})|^p\,\frac{d\theta}{|E|} \le \frac{|E|}{2\pi p}\log \left(\int_E |P_n(e^{i\theta})|^p\,\frac{d\theta}{|E|}\right) \\ &= \frac{|E|}{2\pi p} \left( \log\frac{2\pi}{|E|} + \log\left(\frac{1}{2\pi}\int_E |P_n(e^{i\theta})|^p\,d\theta\right)\right) \\ &\le \frac{|E|}{2\pi p} \log\frac{2\pi}{|E|} + \frac{|E|}{2\pi p} \log\left(\frac{1}{2\pi}\int_0^{2\pi}  |P_n(e^{i\theta})|^p\,d\theta\right) \\ &\le \frac{1}{p}\sup_{x\in(0,1]} x\log \frac{1}{x} + \log\|P_n\|_p =  \frac{1}{ep} + \log\|P_n\|_p.
\end{align*}
\end{proof}

\begin{proof}[Proof of Lemmas \ref{norderlemma} and \ref{Elogest}]
In view of the fact that the $C_k$'s are independent, we have
\begin{align*}
F_{Y_n}(r)&=\PP(Y_n\leq r)=\PP(|C_0|\leq r, |C_1|\leq r, \ldots, |C_n|\leq r)\\
&=\PP(|C_0|\leq r)\PP(|C_1|\leq r)\cdots \PP(|C_n|\leq r),
\end{align*}
and since they are identically distributed,
$F_{Y_n}(r)=[\PP(|C_0|\leq r)]^{n+1}$.
The statement of Lemma \ref{norderlemma} now follows upon differentiation.

By Jensen's inequality,
\[\EE[\log Y_n]\leq \frac{1}{t}\log \EE[Y_n^t],\]
and using \eqref{3.2}, and $R_C(x)\leq 1$, we obtain
\begin{align*}
\EE[Y_n^t] = \int_0^{\infty} x^t (n+1)\rho_C(x)[R_C(x)]^n\,dx &\leq (n+1) \int_0^{\infty}x^t\rho_C(x)dx \\ &= (n+1)\EE[|C_0|^t],
\end{align*}
\end{proof}

\begin{proof}[Proofs of Theorems \ref{MainThm} and \ref{MainThmB}]
We first apply Proposition \ref{mignotteprop} and Jensen's inequality to obtain
\begin{align*}
\EE\left[\left| \tau_n\left(A_r(\alpha,\beta)\right) - \frac{\beta-\alpha}{2\pi}\right|\right] &\leq
\sqrt{\frac{2\pi}{{\bf k}}} \sqrt{\frac{1}{n}\, \EE\left[ m^+\left(\frac{P_n}{\sqrt{|C_0 C_n|}}\right) \right]} \\  &+ \frac{2}{n(1-r)} \, \EE\left[ m\left(\frac{P_n}{\sqrt{|C_0C_n|}}\right)\right].
\end{align*}
A combination of Proposition \ref{Lpprop} with \eqref{comparison} and Lemma \ref{Elogest} gives that \begin{align} \label{5.4}
\EE\left[ m^+\left(\frac{P_n}{\sqrt{|C_0 C_n|}}\right) \right] &\le \EE\left[ \log \left\|\frac{P_n}{\sqrt{|C_0 C_n|}}\right\|_2 \right] + \frac{1}{2e} \nonumber \\ &\le \frac{\log(n+1)}{2} + \EE\left[\log \max_{k=0,\ldots,n}|C_k|\right]-\EE[\log|C_0|] + \frac{1}{2e} \nonumber \\ &\le \frac{t+2}{2t}\log(n+1) + \frac{1}{t}\log \EE[|C_0|^t]-\EE[\log|C_0|]+\frac{1}{2e}.
\end{align}
To justify the use of Proposition \ref{Lpprop}, we note that $ \left\|P_n/\sqrt{|C_0 C_n|}\right\|_2 \ge 1,$ which is a consequence of the fact that $\|P_n\|_2 \ge |C_k|,\ k=0,\ldots,n.$

Using \eqref{1.3}, \eqref{comparison} and Lemma \ref{Elogest}, we obtain that
\begin{align*}
\EE\left[ m\left(\frac{P_n}{\sqrt{|C_0 C_n|}}\right) \right] &\le \EE\left[ \log \left\|\frac{P_n}{\sqrt{|C_0 C_n|}}\right\|_2 \right] \\ &\le \frac{\log(n+1)}{2} + \EE\left[\log \max_{k=0,\ldots,n}|C_k|\right]-\EE[\log|C_0|] \\ &\le \frac{t+2}{2t}\log(n+1) + \frac{1}{t}\log \EE[|C_0|^t]-\EE[\log|C_0|].
\end{align*}
Thus \eqref{3.4} follows from the above estimates.

The proof of Theorem \ref{MainThmB} is similar. We first argue as above,
bounding the discrepancy in terms of $\EE\left[\log \left\|P_n/\sqrt{|C_0 C_n|}\right\|_2\right]$. We then have
\begin{align*}
\EE\left[\log \left\|P_n/\sqrt{|C_0 C_n|}\right\|_2\right]
&\leq \frac{\log(n+1)}{2}+\EE[\log(\max_k|C_k|)]\\ &-\frac{\EE[\log|C_0|]+\EE[\log|C_n|]}{2}.
\end{align*}
We now appeal to \eqref{Enoniid} and Jensen's inequality instead of Lemma \ref{Elogest} to obtain
\[\EE[\log (\max_k|C_k)]\leq \log[\mu +\sigma \sqrt{n}]\leq \frac{1}{2}\log(n+1)+O(1),\]and the theorem follows.
\end{proof}

\begin{proof}[Proof of Proposition \ref{compactprop}]
Using Lemma \ref{lemma5.1}, we obtain as in \eqref{5.3} that
\begin{align*}
\tau_n\left(\CC\setminus A_r(0,2\pi)\right) &\leq \frac{2}{n(1-r)} \, m\left(\frac{P_n}{\sqrt{|C_0C_n|}}\right).
\end{align*}
If $r$ is selected so that $E\subset\CC\setminus A_r(0,2\pi)$, then
\begin{align*}
\EE\left[n \tau_n(E)\right] &\leq \frac{2}{1-r}\, \EE\left[ m\left(\frac{P_n}{\sqrt{|C_0 C_n|}}\right) \right] \le \frac{2}{1-r}\, \EE\left[ \log \left\|\frac{P_n}{\sqrt{|C_0 C_n|}}\right\|_2 \right] \\ &\le
 \frac{2}{1-r}\,\left(\frac{t+2}{2t}\log(n+1) + \frac{1}{t} \log \EE[|C_0|^t] - \EE[\log|C_0|]\right),
\end{align*}
where we used \eqref{1.3}, \eqref{comparison} and Lemma \ref{Elogest}. An elementary argument shows that $r=1/(\textup{dist}(E,\TT)+1)$ implies $E\subset\CC\setminus A_r(0,2\pi)$.
\end{proof}

\begin{proof}[Proof of Proposition \ref{polygonprop}]
Suppose that the vertices of our polygon are located at the points $e^{i\theta_j},\ j=1,\ldots,k.$ It follows from a simple Euclidean geometry consideration that the polygon is contained in the union of the
closed disk $U=\{z\in\CC:|z| \le 1-\sqrt{\log{n}/n}\}$ and sectors $S_j=\{z\in\CC: |\arg z -\theta_j|<C \sqrt{\log{n}/n}\},$ for each $n\in\NN,$ where  $C>0$ is a constant that depends only on the polygon. Note that Proposition \ref{compactprop} applies to $U$, so that \eqref{3.5} gives
\begin{align} \label{5.5}
\EE\left[n \tau_n(U)\right] = O\left(\sqrt{n\log{n}}\right)\quad\mbox{as } n\to\infty.
\end{align}
We recall the result of Mignotte \cite{M} (see also \cite{AM})
\begin{equation*}
\left| \tau_n\left(S(\alpha,\beta)\right) - \frac{\beta-\alpha}{2\pi}\right|\leq
\sqrt{\frac{2\pi}{{\bf k}}}\, \sqrt{\frac{1}{n}\,m^+\left(\frac{P_n}{\sqrt{|C_0C_n|}}\right)},
\end{equation*}
and apply it to each sector $S_j.$ It follows from the above estimate and Jensen's inequality that
\begin{align*}
\EE\left[n \tau_n(S_j)\right] = O\left(\sqrt{n\log{n}}\right) + O\left(\sqrt{n\EE\left[ m^+\left(\frac{P_n}{\sqrt{|C_0 C_n|}}\right) \right]}\right)\quad\mbox{as } n\to\infty,
\end{align*}
for each $j=1,\ldots,k.$ Using the already available inequality \eqref{5.4}, we conclude that the second term is of the same order as the first, so that
\begin{align*}
\EE\left[n \tau_n(S_j)\right] = O\left(\sqrt{n\log{n}}\right)\quad\mbox{as } n\to\infty,
\end{align*}
for each $j=1,\ldots,k.$ Thus \eqref{3.6} is a consequence of \eqref{5.5} and the above equation.
\end{proof}

\begin{proof}[Proof of Proposition \ref{diskprop}]
We follow ideas similar to those used in the proof of Proposition  \ref{polygonprop}. Note that the intersection of the disk $D_r(w)$ and $\TT$ is an arc with endpoints $e^{i\alpha}$ and $e^{i\beta}$, where $\alpha<\beta<\alpha+2\pi$ and $\beta-\alpha=4\arcsin(r/2).$ Furthermore, $D_r(w)$ is contained in the union of the circular sector $S=\{z\in\CC: \alpha - C\sqrt{\log{n}/n} < \arg z < \beta + C\sqrt{\log{n}/n}\},$ where $C>0$ depends only on $r$, and the set $F=\{z\in\CC:|z| \le 1-\sqrt{\log{n}/n} \mbox{  or  } |z| \ge 1+\sqrt{\log{n}/n}\}$.  Proposition \ref{compactprop} implies that
\begin{align*}
\EE\left[n \tau_n(F)\right] = O\left(\sqrt{n\log{n}}\right)\quad\mbox{as } n\to\infty.
\end{align*}
On the other hand, Mignotte's estimate gives that
\begin{equation*}
\EE\left[n\tau_n\left(S\right) - \frac{\beta-\alpha}{2\pi}n\right] = O\left(\sqrt{n\log{n}}\right) \quad\mbox{as } n\to\infty,
\end{equation*}
arguing as in the proof of Proposition  \ref{polygonprop}. Combining the last two equations, we arrive at \eqref{3.7}.
\end{proof}

\begin{proof}[Proofs of Lemmas \ref{gausslemma} and \ref{heavylemma}]
The first statement follows from Lemma \ref{norderlemma} and the fact that $R^G(x)=\int_0^xr\,e^{-r^2}dr=(1-e^{-x^2})/2$; the series expression is readily obtained by using the binomial theorem.

To analyze the asymptotics of $\EE[\log Y^G_n]$, we first perform the obvious change of variables $u=\exp(-x^2)$ to obtain
\begin{align*}
\EE[\log Y^G_n] &= \frac{n+1}{2}\int_0^1\log\log\left(\frac{1}{u}\right)(1-u)^n\,du \\ &=I_1(n)+I_2(n)=\int_0^{\frac{1}{(n+1)^2}}+\int_{\frac{1}{(n+1)^2}}^{1}.
\end{align*}
Since $\log\log(1/u)$ is decreasing, for all sufficiently large $n$, the latter integral admits the estimate
\begin{align*}
I_2(n) &\leq \frac{(n+1)\log\log (n+1)^2}{2}\int_{\frac{1}{(n+1)^2}}^1(1-u)^n du \\ &=\frac{\log\log (n+1)^2}{2}\left(1-\frac{1}{(n+1)^2}\right)^{n+1} \leq \frac{\log\log (n+1)^2}{2}.
\end{align*}
We next show that the contribution arising from the first integral is negligible. An integration by parts shows that
\[\int_0^x \log\log\left(\frac{1}{u}\right)du=x\log\log\left(\frac{1}{x}\right)-\mathrm{li}(x), \quad x>0,\]
where $\mathrm{li}(x)=\int_0^x1/\log(u)du$ is the logarithmic integral. We use this identity together with the crude estimate $(1-u)^n \leq 1$ to obtain
\begin{align} \label{integralatzero}
I_1(n) &\leq \frac{n+1}{2}\int_0^{\frac{1}{(n+1)^2}}\log\log\left(\frac{1}{u}\right)du \\ &=
\frac{1}{2(n+1)}\log\log (n+1)^2-\frac{n+1}{2}\textrm{li}\left(\frac{1}{(n+1)^2}\right). \nonumber
\end{align}
The first term on the right-hand side in \eqref{integralatzero} clearly tends to zero as $n\rightarrow \infty$. We then note that the function $x\mapsto \mathrm{li}(x^2)/x$ is continuous on $(0,1)$, and by L'H\^opital's rule,
$\lim_{x\rightarrow 0}\mathrm{li}(x^2)/x=\lim_{x\rightarrow 0}x/\log(x)=0$. Hence $I_1(n)=o(1)$ as required.

For $\rho^{P_{\alpha}}$, we have
\begin{align*}
\EE[\log(Y_n^{P_{\alpha}})] &=(n+1) \int_0^{1} \log\left[\frac{1}{(1-u)^{\frac{1}{\alpha-1}}}\right]u^n\,du \\ &= \frac{n+1}{\alpha-1}\int_0^{1}\log\frac{1}{1-u}\,u^n\,du,
\end{align*}
and the result follows from Euler's formula $\mathbf{H}_n=\int_0^1\frac{1-u^n}{1-u}du$
and an integration by parts.
\end{proof}
\section*{Acknowledgments}
The second author thanks Martin Bender for interesting discussions.

\end{document}